\def\XXint#1#2#3{{\setbox0=\hbox{$#1{#2#3}{\int}$ }
		\vcenter{\hbox{$#2#3$ }}\kern-.6\wd0}}
\DeclareMathSymbol{\intprod}{\mathbin}{MnSyC}{'270}
\newcommand{\LB}{\left[}
\newcommand{\RB}{\right]}
\newcommand{\LA}{\left\langle}
\newcommand{\RA}{\right\rangle}
\newcommand{\T}{{\mathcal T}}
\newcommand{\dist}{{\mbox{dist}}}
\newcommand{\pd}{{\partial}}
\newcommand{\inj}{{\mathrm{inj} }}
\newtheorem{thm}{Theorem}[section]
\newtheorem{lemma}[thm]{Lemma}
\newtheorem*{lemma*}{Lemma}
\newtheorem{prop}[thm]{Proposition}
\newtheorem{cor}[thm]{Corollary}
\newtheorem*{conj*}{Conjecture}
   \newtheoremstyle{others}
     {3pt}
     {2pt}
     {}
     {}
     {\bf}
     {.}
     {.5em}
     {}
\theoremstyle{others}
\newtheorem*{rmk*}{Remark}
\newtheorem{defn}[thm]{Definition}
\numberwithin{equation}{section}
 \DeclareMathOperator{\tr}{tr}
 \def\<{\left\langle} \def\>{\right\rangle}
 \def\({\left(} \def\){\right)}
 \newcommand{\eps}{\varepsilon}
 \newcommand{\Ga}{\Gamma}
 \renewcommand{\th}{\theta}
 \newcommand{\p}{\partial}
 \newcommand{\n}{\nabla}
\begin{document}

\title[Harmonic map flow with strict type-II blowup]{Strict type-II blowup in harmonic map flow}
\author{Alex Waldron}
\address{University of Wisconsin, Madison}
\email{waldron@math.wisc.edu}

\begin{abstract} A finite-time singularity of 2D harmonic map flow will be called ``strictly type-II'' if the outer energy scale satisfies
$$\lambda(t) = O (T-t)^{\frac{1 + \alpha}{2} }.$$
We prove that the body map at a strict type-II blowup is H\"older continuous. This is relevant to a conjecture of Topping. 
\end{abstract}

\maketitle

\thispagestyle{empty}


\section{Introduction}

Let $M$ and $N$ be compact Riemannian manifolds. For differentiable maps $u: M \to N,$ we may define the Dirichlet energy:
$$\frac12 \int_M |d u|^2 \, dV.$$
Its downward gradient flow is given by
\begin{equation}\label{hm}
\frac{\pd u}{\pd t} = \mathcal{T}(u).
\end{equation}
Here, $\T(u)$ is the tension field of $u,$ a generalization of the Laplace-Beltrami operator to maps between manifolds.
The evolution equation (\ref{hm}), known as \emph{harmonic map flow}, was introduced in 1964 by Eells and Sampson \cite{eellssampson} and has been studied since then almost without interruption.


When $M$ has dimension two, the Dirichlet functional is conformally invariant; we shall be concerned exclusively with this case. Struwe \cite{struwehmsurfaces} constructed a global weak solution $u(t)$ 
of (\ref{hm}) starting from any initial map of Sobolev class $W^{1,2}(M,N).$ The solution is smooth away from finitely many singular times, when isolated singularities (``bubbles'') may form. For a singular time $T < \infty,$ the limit
$$u(T) = \lim_{t \nearrow T} u(t)$$
exists weakly in $W^{1,2}$ and smoothly away from the bubbling set, 
and is referred to as the \emph{body map}. 

Note that Struwe's construction leaves open the possibility that the body map will be discontinuous.
Topping \cite{toppingwindingbehavior} demonstrated that for certain target manifolds and initial data, $u(T)$ can indeed have an essential singularity at a bubble point.
At the same time, he conjectured that for well-behaved (specifically, real-analytic) metrics on the target,
the body map always extends continuously across the bubbling set. Topping's conjecture is the {\it sine qua non} for future geometric applications of harmonic map flow.

In previous joint work with C. Song \cite{songwaldron}, we established H\"older continuity of the body map when $N$ is compact K\"ahler with nonnegative holomorphic bisectional curvature and the energy of the initial map is near the holomorphic energy. 
The argument relied partly on establishing a bound on the outer energy scale of the form
\begin{equation}\label{stricttypeII}
\lambda(t) = O(T - t)^{\frac{1 + \alpha}{2}},
\end{equation}
where $0 < \alpha \leq 1.$
We refer to a singularity satisfying (\ref{stricttypeII})
as a \emph{strict type-II blowup}. Note that (\ref{stricttypeII}) is a refinement of the ordinary type-II blowup estimate for 2D harmonic map flow: 
\begin{equation}\label{typeII}
\lambda(t) = o(T - t)^{\frac12}.
\end{equation}
For a proof of (\ref{typeII}), see 
\cite[Theorem 1.6v]{toppingwindingbehavior}. 

The strict type-II bound (with $\alpha = 1$) is most familiar from the rotationally symmetric setting. Angenent, Hulshof, and Matano \cite{angenenthulshofmatano} proved that the finite-time blowup first constructed by Chang, Ding, and Ye \cite{changdingye} occurs with rate $\lambda(t) = o(T - t).$ 
Raphael and Schweyer \cite{raphaelschweyer} determined a large set of rotationally symmetric initial data that blows up under (\ref{hm}) with the precise rate
\begin{equation}\label{raphaelschweyerblowuprate}
\lambda(t) \sim \kappa \frac{T - t}{(\ln T - t )^2}.
\end{equation}
They also proved in this context that the body map is $W^{2,2},$ hence $C^\beta$ for each $\beta < 1.$\footnote{This also follows from the main theorem of \cite{songwaldron}.} Davila, Del Pino, and Wei \cite{daviladelpinowei} produced a larger set of examples with blowup rate (\ref{raphaelschweyerblowuprate}), whose body maps are continuous by construction. 

In another direction, Topping \cite{toppingholder} proved continuity of the body map if the Dirichlet energy is H\"older continuous as a function of time. One step in the proof was to establish that the strict type-II bound (\ref{stricttypeII}) follows from this assumption (see \cite[Lemma 2.2]{toppingholder}). Hence, the strict type-II bound with arbitrary exponent has appeared in previous work, although a much stronger assumption was required to obtain continuity of the body map.


Accordingly, the known examples of strict type-II blowup all have continuous body maps. On the other hand, the only known example with discontinuous body map, due to Topping, fails to be strictly type-II---see \cite[Theorem 1.14e]{toppingwindingbehavior}. Our main theorem confirms the implication, as follows. 

\begin{thm}\label{thm:intro} For any strict type-II blowup of harmonic map flow in dimension two, with $0 < \alpha \leq 1$ in (\ref{stricttypeII}),
the body map is $C^{ \frac{\alpha}{3} }.$
\end{thm}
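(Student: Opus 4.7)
The plan is to estimate the spatial oscillation of the body map $u(T)$ near the bubble point $p$ by decomposing $u(T, x_1) - u(T, x_2)$ through an intermediate time $t = T - \tau$ close to $T$, and to use strict type-II to optimize the balance between spatial and temporal scales.

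First, I would establish pointwise control of $u$ away from the bubble. By Struwe's parabolic $\epsilon$-regularity together with the outer energy scale assumption, for $x$ with $|x - p| = \rho$ and times $s$ satisfying $\lambda(s) < \rho/10$, one has $|du(s, x)| \leq C/\rho$ and $|\partial_s u(s, x)| \leq C/\rho^2$. Under the hypothesis $\lambda(s) \leq C(T - s)^{(1 + \alpha)/2}$, this condition holds throughout the interval $[T - c \rho^{2/(1 + \alpha)}, T]$ for $\rho$ sufficiently small.

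Next, for $x_1, x_2$ with $\rho/2 \leq |x_i - p| \leq 2 \rho$ and $|x_1 - x_2| = \delta$, I would write
\[
|u(T, x_1) - u(T, x_2)| \leq \sum_{i = 1, 2} \int_t^T |\partial_s u(s, x_i)| \, ds + |u(t, x_1) - u(t, x_2)|,
\]
and estimate each term using the bounds above. The naive approach gives spatial oscillation $\leq C \delta/\rho$ and temporal contribution $\leq C \tau/\rho^2$, which together produce only a BMO-type estimate --- insufficient for H\"older continuity. To extract H\"older regularity one must refine using the asymptotic bubble profile: in the transition region $|x - p| \gg \lambda(t)$, the bubble's contribution to $|du|$ scales as $\lambda(t)/|x - p|^2$ and that of $|\partial_t u|$ as $|\dot\lambda(t)|/|x - p|$. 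Combined with $\lambda(t) = O((T - t)^{(1 + \alpha)/2})$ and the corresponding bound on $|\dot\lambda(t)|$, this yields sharper pointwise estimates in terms of $\tau$, $\rho$, and $\alpha$.

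The heart of the argument is then a three-way optimization: balancing the refined spatial oscillation, the time integral of $|\partial_s u|$, and the strict type-II constraint $\tau \leq c \rho^{2/(1 + \alpha)}$ against the free parameters $\rho$ and $\tau$ as functions of $\delta$. I expect this optimization to produce the H\"older exponent $\alpha/3$. The main obstacle is closing the estimate rigorously: one cannot invoke the bubble profile pointwise by hand, and isolating the bubble's contribution to $|du|$ and $|\partial_t u|$ uniformly in $t$ approaching $T$ will likely require a bubbling compactness argument combined with a localized energy identity for the flow, since we cannot a priori control the body map's own oscillation in the transition region.
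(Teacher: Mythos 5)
Your proposal has a genuine gap at its core: the ``refinement'' step, where you posit that in the transition region the contribution of the bubble to $|du|$ scales like $\lambda(t)/|x-p|^2$ and that of $|\partial_t u|$ like $|\dot\lambda(t)|/|x-p|$, is precisely the statement that needs to be proved, and it cannot be assumed. A priori the neck region may carry energy and oscillation that is not governed by the bubble tail at all---Topping's example shows the body map can even be discontinuous, so no pointwise profile ansatz in the neck is available without proof. Bubbling compactness, which you propose as the fix, only gives convergence along sequences of times and provides no uniform-in-$t$ pointwise control in the neck; moreover, the hypothesis $\lambda(t)=O(T-t)^{(1+\alpha)/2}$ gives no bound whatsoever on $\dot\lambda(t)$, since the outer energy scale is defined by an energy threshold and need not be differentiable or monotone. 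Your own accounting shows the unrefined estimate fails (the temporal term $\tau/\rho^2\sim\rho^{-2\alpha/(1+\alpha)}$ diverges), so the entire content of the theorem is deferred to the unjustified step, and the claimed three-way optimization producing the exponent $\alpha/3$ never gets off the ground.

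For comparison, the paper obtains the neck decay by a completely different mechanism, with no reference to a bubble profile or to $\dot\lambda$: (i) a parabolic supersolution argument for the angular energy $f$ (the operator $\square_\nu$ with $\nu$ close to $1$) gives decay of $f$ across the neck; (ii) the stress-energy identity (\ref{stressidentity}), integrated in time, converts the global smallness of $\int\!\!\int|\T|^2$ into an integral bound on $g^2-f^2$ at a well-chosen radius $\rho_1$, hence an $L^2_t$ bound on the radial energy $g(\rho_1,\cdot)$; (iii) a boundary-data heat-kernel estimate (Proposition \ref{gestimate}) promotes that time-integrated bound to pointwise decay of $g$ throughout the neck; and (iv) an induction on dyadic values of $\lambda_1$, with the strict type-II hypothesis entering only to guarantee $\lambda(t_0)\leq\rho$ at the earlier time $t_0=t_1-\zeta^2/\kappa^2$, closes the argument and produces $r|du|\lesssim\sqrt{\eps}\,(\lambda_1 R/r+(r/R)^\alpha)^{1/3}$, after which H\"older continuity of $u(T)$ follows by letting $t_1\nearrow T$, $\lambda_1\searrow 0$ and integrating radially. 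If you want to salvage your outline, the missing ingredient is exactly this kind of quantitative neck estimate; the oscillation-splitting and optimization you describe cannot substitute for it.
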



\noindent Our main technical result is Theorem \ref{thm:mainthm} below, 
and Corollary \ref{cor:detailedthm} gives the formal statement of Theorem \ref{thm:intro}. 


The proof 
depends on obtaining decay estimates for the differential of $u$ in the ``neck region,'' {\it i.e.}, the area near the singularity but outside the (vanishing) energy scale.
The required estimate on the angular component of $du$ is already known \cite[Lemma 5.4]{songwaldron}, 
so it remains only to estimate the radial component.
We obtain a bound on the difference between the radial and angular components from a well-known identity (\ref{stressidentity}), giving an integral bound on the radial component under the flow. 
Using a specialized parabolic estimate (Proposition \ref{gestimate}) and a bootstrap argument, we are able to promote this integral bound to pointwise decay throughout the neck region. 

\subsection*{Acknowledgement} The author is partially supported by NSF DMS-2004661.

\section{Technical results}

For an introduction to harmonic map flow, we refer the reader to \cite[\S 2]{songwaldron} or to the textbook of Lin and Wang \cite{linwangbook}.

Let $(M,g)$ and $(N,h)$ be Riemannian manifolds.
Given any smooth map $u : M \to N,$ we denote the pullback of $h$ to $u^*TN$ by $\LA \cdot, \cdot \RA,$ which we combine with $g$ on tensors.
The differential $du$ is a section of $T^*M \otimes u^*TN,$ with norm squared
$$|du|^2 = g^{ij} \LA \p_i u, \p_j u \RA.$$
The tension field is given by
$$\T(u) = \tr_g \n du.$$
Here, $\nabla$ is the Levi-Civita connection on $T^*M$ coupled with the pullback to $u^*TN$ of the Levi-Civita connection on $N.$

Suppose that $\dim(M) = 2.$ 
We write
\begin{equation}\label{conformalmetric}
g=\xi^2(dr^2+r^2d\th^2)
\end{equation}
for the given metric in conformal coordinates, where $\xi$ is a smooth function. By adjusting the conformal chart to second order, we may assume $d \xi (0) = 0.$ We may further assume
\begin{equation}\label{smallness1}
\left| \xi - 1\right| + r|d \xi| + r^2 |\nabla d \xi| \leq \xi_0 r^2
\end{equation}
for a constant $0 \leq \xi_0 \leq \frac12,$ after rescaling $g$ by a constant. This implies
\begin{equation}\label{rversusdist}
|r- \dist_g(x_0, \cdot)| \leq C \xi_0 r^2.
\end{equation}
In view of these bounds, the difference between conformal and geodesic coordinates will be of no significance for our results; we use conformal coordinates only for convenience.

Let
$$S_{ij} = \LA \p_i u , \p_j u \RA - \frac12 g_{ij} |du|^2$$
be the \emph{stress-energy tensor} of $u.$ This is a symmetric 2-tensor on $M,$ which satisfies
\begin{equation}\label{stressdiv0}
\nabla^i S_{ij} = \LA \T(u), \p_j u \RA.
\end{equation}
For a derivation of (\ref{stressdiv0}), see \cite[\S 2.2]{songwaldron}.

The radial vector field $X^i$ in conformal coordinates is conformally Killing, {\it i.e.}
\begin{equation*}
\nabla^i X^j + \n^j X^i = \lambda g^{ij}
\end{equation*}
for a scalar function $\lambda.$ Contracting with $X^i$ in (\ref{stressdiv0}), we have
\begin{equation}\label{stressdiv}
\begin{split}
\nabla^i (X^j S_{ij}) & = \LA \T(u), X^j \p_j u \RA + \frac12 \left( \nabla^i X^j + \n^j X^i \right) S_{ij} \\
& =  \LA \T(u), X^j \p_j u \RA,
\end{split}
\end{equation}
since $g^{ij}S_{ij} = 0$ in dimension two.
Integrating (\ref{stressdiv}) over a disk $D_r$ in the conformal chart, and applying the divergence theorem, we obtain
\begin{equation}\label{stressidentity}
\int_{S^1_r} X^i X^j S_{ij} \, d\theta = \int_{D_r} \LA \T(u), X^j \p_j u \RA \, dV. 
\end{equation}
This identity is well known from the theory of approximate harmonic maps, and will be used crucially below.


Next, we need the following parabolic estimates. For $0 \leq \nu \leq 1$ and $\mu > 1,$ let
$$\square_\nu = \p_t - \left( \p_r^2 + \frac{1}{r} \p_r - \frac{\nu^2}{r^2} \right),$$
$$\Delta_\mu =\p_r^2 + \frac{(\mu - 1)}{r} \p_r.$$
We have
\begin{equation}\label{squaretriangle}
\square_\nu \left( r^{\nu} y \right) = r^\nu \left( \p_t - \Delta_{2(\nu + 1)} \right) y.
\end{equation} 
Also notice that
\begin{equation}\label{squarerbeta}
\square_\nu \! r^\beta = \left( \nu^2 - \beta^2 \right) r^{\beta - 2}.
\end{equation}

\begin{lemma}\label{lemma:evolutions}
Let $u$ be a solution of (\ref{hm}) 
with respect to a metric $g$ of the form (\ref{conformalmetric}-\ref{smallness1}). Suppose that for some $0 < \eta^2 < \eps_0,$ 
 we have
\begin{equation}\label{evolutions:derivbounds}
r|du|+ r^2|\n du| + r^3|\n^2 du| + r^4|\n^3 du| \le \eta. 
\end{equation}
Then the angular energy
\begin{equation*}
f = f(u;r, t) := \sqrt{ \int_{ S_r^1 } |u_\th (r,\theta,t)|^2 d\th + \int_{ S_r^1 } |\nabla_\theta u_\th (r,\theta,t)|^2 d\th }
\end{equation*}
satisfies a differential inequality
\begin{equation}\label{evolutions:angular}
\square_\nu \! f \le C \xi_0 \eta,
\end{equation}
where $\nu = \sqrt{1 - C \eta}.$
The radial energy
\begin{equation}\label{radialenergy}
g = g(u;r, t) := \sqrt{ \int_{ S_r^1 } r^2 |u_r (r,\theta,t)|^2 d\th }
\end{equation}
satisfies
\begin{equation}\label{evolutions:radial}
\square_\nu \! \left(\frac{g}{r} \right) \le \frac{6f}{r^3} + \frac{ C \xi_0 \eta}{r}.
\end{equation}
Here, $\varepsilon_0$ depends on the geometry of $N,$ and $\xi_0$ is the constant of (\ref{smallness1}).
\end{lemma}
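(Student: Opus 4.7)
The strategy is to differentiate the flow \eqref{hm} in the angular and radial coordinate directions, derive Bochner-type evolution equations on the pullback bundle $u^*TN$, and manipulate the resulting $\theta$-slice $L^2$ norms, using Wirtinger and Kato inequalities to extract the Hardy potential $-\nu^2/r^2$ built into $\square_\nu$.

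For the angular estimate, I would set $V = \nabla_\theta u$ and $W = \nabla_\theta V$, working in an isometric embedding $N \hookrightarrow \R^K$ so that intrinsic and coordinate derivatives agree up to Christoffel terms of order $|du|^2$. Since $\partial_\theta$ commutes with $\partial_t$ and with the spatial coordinate derivatives in $\mathcal{T}(u)$, two Bochner identities yield
\[
\nabla_t V - \Delta_g V = R^N(V,du)du + O(\xi_0|V| + \xi_0 r |\nabla V|)
\]
and an analogous equation for $W$. Computing $(\partial_t - \partial_r^2 - r^{-1}\partial_r)F$ for $F = f^2$, curvature terms are of order $\eta^2 r^{-2} F$ by \eqref{evolutions:derivbounds}, and metric-perturbation errors are of order $\xi_0\eta f$. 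The angular component of $\int |\nabla V|^2\,d\theta$ dominates $r^{-2}\int|V|^2\,d\theta$ by Wirtinger on $S^1$---valid because $V = u_\theta$ has zero mean on each circle by periodicity---while the radial component dominates $(\partial_r f)^2$ by Kato for the $L^2_\theta$-norm. Combining these with the algebraic identity
\[
\square_\mu f^2 = 2f\,\square_\mu f - 2(\partial_r f)^2 - \mu^2 r^{-2}f^2
\]
and choosing $\mu^2 = 2 - C\eta$ causes all $(\partial_r f)^2$ terms to cancel, yielding \eqref{evolutions:angular} with $\nu^2 \leq \mu^2/2 = 1 - C\eta$.

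For the radial estimate, I would commute $\partial_r$ with $\mathcal{T}$ to obtain
\[
\nabla_t u_r - \Delta_g u_r = -\tfrac{2}{r^3}\nabla_\theta u_\theta + R^N(u_r,du)du + O(\xi_0 \eta r^{-1}),
\]
whose first right-hand term comes from the commutator $[\partial_r, r^{-2}\partial_\theta^2]u = -2 r^{-3} u_{\theta\theta}$. Testing against $r u_r$, integrating over $S^1$, and controlling the $u_{\theta\theta}$ forcing by Cauchy--Schwarz together with $\|\nabla_\theta u_\theta\|_{L^2_\theta} \leq f$ gives an evolution inequality for $g^2$. The algebraic identity \eqref{squaretriangle} signals that $g/r$ is the natural unknown, since conjugation by $r$ absorbs the Hardy potential for $\square_\nu$; a Kato step then reduces $(g/r)^2$ to $g/r$, and the constant $6$ in \eqref{evolutions:radial} arises from the precise Cauchy--Schwarz bookkeeping on the cross term.

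The main obstacle will be the careful accounting of all the small error terms---from $R^N$, from $\Delta_g - \Delta_{\mathrm{flat}}$, from the Christoffel symbols of $N$ appearing in $\mathcal{T}(u)$, and from commutators among $\nabla_r$, $\nabla_\theta$, and $\nabla_t$---ensuring each contribution falls into exactly one of the three categories: absorbable into $\nu^2 = 1 - C\eta$, part of the $C\xi_0\eta$ forcing, or the cross forcing $6f/r^3$. A secondary technicality is the Kato passage from squared to unsquared quantities, which on the zero set is handled by the standard $\sqrt{\,\cdot\, + \varepsilon}$ regularization.
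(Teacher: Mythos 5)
Your overall strategy for the angular part---differentiate the flow in the angular direction (twice), take slice-wise $L^2_\theta$ norms, use the Wirtinger/Poincar\'e inequality on $S^1$ with a $1-C\eta$ loss from Christoffel and curvature terms to generate the Hardy coefficient, and then a Kato-type ``divide by $f$'' step so that the $(\partial_r f)^2$ terms cancel---is essentially the paper's argument; the paper merely organizes the same computation in cylindrical coordinates $(s=\ln r,\theta)$, substituting via $\nabla_s u_s+\nabla_\theta u_\theta=\xi^2e^{2s}u_t$ instead of commuting derivatives through $\mathcal{T}$ directly, and it quotes its predecessor for the $\int|u_\theta|^2$ piece while computing only the $\int|\nabla_\theta u_\theta|^2$ piece. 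Your ``$\mu^2=2-C\eta$'' bookkeeping is loosely stated but amounts to the same $\nu^2=1-C\eta$, and your error accounting ($R^N$ terms of size $\eta^2 r^{-2}$ absorbable into $\nu$, metric terms giving the $C\xi_0\eta$ forcing) matches the paper's treatment of the terms $I$ and $I\!I$.

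The one genuine problem is in the radial part as displayed. The commutator is $[\partial_r,\Delta]=-\tfrac{1}{r^2}\partial_r-\tfrac{2}{r^3}\partial_\theta^2$, and your evolution equation for $u_r$ omits the term $-\tfrac{1}{r^2}u_r$. That term is not a negligible error: it is the sole source of the Hardy potential in \eqref{evolutions:radial}. In your formulation $g/r=\|u_r\|_{L^2_\theta}$ already, so there is no conjugation by $r$ left to perform; the appeal to \eqref{squaretriangle} does not produce the $+\nu^2/r^2$, and testing your displayed equation against $u_r$ yields only $\square_0(g/r)\le Cf/r^3+C\xi_0\eta/r$, which is strictly weaker than \eqref{evolutions:radial} and would not feed correctly into Proposition \ref{gestimate} (the whole point downstream is the exponent $\nu$ close to $1$). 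The fix is immediate: restoring $-\tfrac{1}{r^2}u_r$ contributes, after pairing and the Kato step, exactly $+\tfrac{1}{r^2}\bigl(\tfrac{g}{r}\bigr)$, i.e.\ the operator $\square_1$, and $\square_\nu h\le\square_1 h$ for $h\ge 0$ since $\nu\le 1$. This is precisely how the paper obtains it: in cylindrical coordinates the same term appears as the drift in $(e^{2s}\partial_t-\partial_s^2+2\partial_s)g$, which equals $r^3\,\square_1(g/r)$ after dividing by $r$. With that correction (and the bookkeeping producing the constant $6$ from the cross term $2f_1g$ plus the curvature term $C_N\eta f_0 g$), your route coincides with the paper's.
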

\begin{proof} The proof is an elementary extension of prior calculations; see Appendix \ref{app:evolutions}.
\end{proof}


\begin{prop}\label{festimate} Let $-\nu \leq \beta_i \leq \nu \leq 1,$ for $i = 0,1.$ 
Suppose that $f(r,t)$ is continuous on $\LB \rho, 1 \RB \times \LB \tau, T \right)$ and satisfies
\begin{equation}\label{festimate:equation}
\Box_\nu \! f \leq A,
\end{equation}
with
\begin{equation}\label{festimate:assumptions}
\begin{split}
| f(r,\tau) | & \leq A \left( \left( \frac{\rho}{r} \right)^{\beta_0} + r^{\beta_1} \right), \\
f(\rho, t)  \leq & A, \qquad f(1, t)  \leq A.
\end{split}
\end{equation}
Then, given $0 < \kappa \leq 1/2,$ 
for
$$\frac{\rho}{\kappa} \leq r \leq \kappa \quad \text{and} \quad \tau + \frac{r^2}{\kappa^2} \leq t < T,$$
we have
\begin{equation}\label{festimate:conclusion}
| f(r,t) | \leq C_{\ref{festimate}} A \left( \kappa^{\nu + \beta_0} \left( \frac{\rho}{r} \right)^{\beta_0} + \kappa^{\nu - \beta_1} r^{\beta_1} \right).
\end{equation}
Here $C_{\ref{festimate}}$ depends on $\beta_0, \beta_1,$ and $\nu.$
\end{prop}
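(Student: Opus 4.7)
My plan is to apply the parabolic comparison principle for $\square_\nu$ against an explicit heat-kernel-type supersolution. The two key preliminaries are the calculation $\square_\nu r^\beta = (\nu^2 - \beta^2)r^{\beta-2}$, which makes $r^{\beta_1}$ and $(\rho/r)^{\beta_0}$ stationary supersolutions whenever $|\beta_i| < \nu$, and the intertwining identity \eqref{squaretriangle}, which identifies $\square_\nu$ via the substitution $f = r^\nu y$ with the standard heat operator $\partial_t - \Delta_{d}$ in dimension $d = 2(\nu+1)$. This identification suggests importing the scaling of heat flow in $d$ dimensions: for the singular initial datum $r^{-(\nu-\beta_1)}$, the heat evolution produces at time $s$ roughly $\max(r, \sqrt{s})^{-(\nu-\beta_1)}$, which at the parabolic scale $s = r^2/\kappa^2$ yields exactly the improvement factor $\kappa^{\nu - \beta_1}$.

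Guided by this heuristic, I construct the outer mollified supersolution
\[
\Phi_{\text{out}}(r, t) = CA\, r^\nu \bigl(r^2 + c_1(t-\tau)\bigr)^{-(\nu - \beta_1)/2}, \qquad c_1 := 2(\nu + \beta_1) > 0.
\]
With this choice of $c_1$, a direct computation of $\square_\nu\Phi_{\text{out}}$ (reducing via \eqref{squaretriangle} to $(\partial_t - \Delta_d)$ applied to $(r^2 + c_1 s)^{-(\nu-\beta_1)/2}$) has its leading $r^2$-coefficient vanish, and the remaining $s$-coefficient is positive, so $\square_\nu \Phi_{\text{out}} \geq 0$. At $t = \tau$, $\Phi_{\text{out}} = CAr^{\beta_1}$ matches the outer piece of the initial-data bound; at $r = 1$, $\Phi_{\text{out}} \leq CA$ majorizes $f(1, t) \leq A$; and in the regime of the conclusion, $r^2 + c_1(t - \tau) \geq c_1 r^2/\kappa^2$, giving $\Phi_{\text{out}} \leq C'A\kappa^{\nu - \beta_1}r^{\beta_1}$, the outer contribution to \eqref{festimate:conclusion}. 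A parallel mollification of the inner harmonic $(\rho/r)^\nu$ yields a supersolution $\Phi_{\text{in}}$ producing the $\kappa^{\nu - \beta_0}(\rho/r)^{\beta_0}$ term, and the constant source is absorbed by an additive $CA/\nu^2$, using $\square_\nu[CA/\nu^2] = CA/r^2 \geq A$ on $r \leq 1$.

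Setting $\Phi := \Phi_{\text{out}} + \Phi_{\text{in}} + CA/\nu^2$, one checks $\Phi \geq f$ on the parabolic boundary of $[\rho, 1] \times [\tau, T)$ and $\square_\nu \Phi \geq A \geq \square_\nu f$ in the interior; the comparison principle yields $f \leq \Phi$ throughout, and the final conclusion \eqref{festimate:conclusion} follows by restricting to the interior regime and estimating each term.

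The main technical obstacle will be the construction of the inner barrier $\Phi_{\text{in}}$. Because the substitution $r \leftrightarrow \rho/r$ introduces a nontrivial conformal factor in $\square_\nu$, $\Phi_{\text{in}}$ cannot be obtained by a simple reflection from $\Phi_{\text{out}}$, and its supersolution property requires a separate direct computation. One must also verify that $\Phi_{\text{in}}$ simultaneously dominates the inner boundary datum $f(\rho, t) \leq A$ and the inner-side initial bound $A(\rho/r)^{\beta_0}$; this is routine for $\beta_0 > 0$ via the same type of mollified ansatz as for $\Phi_{\text{out}}$, but may require a slightly different form (for example, a hybrid involving the stationary harmonic $A(\rho/r)^\nu$) when $\beta_0 \leq 0$, where the initial datum $(\rho/r)^{\beta_0}$ is bounded in the interior and the improvement mechanism shifts.
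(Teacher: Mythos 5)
Your overall strategy---comparison against explicit supersolutions built through the conjugation \eqref{squaretriangle}---is the same family of argument as the paper's, and your computation that $r^{\nu}\bigl(r^{2}+2(\nu+\beta_{1})(t-\tau)\bigr)^{-(\nu-\beta_{1})/2}$ is a $\square_\nu$-supersolution reproducing the $\kappa^{\nu-\beta_{1}}$ gain is correct. But two steps fail as written. First, the additive constant $CA/\nu^{2}$ you use to absorb the source does not decay: from $f\le\Phi$ you then only conclude $f\lesssim A$ in the interior regime, which does \emph{not} imply \eqref{festimate:conclusion}, since its right-hand side is much smaller than $A$ when $\kappa$ is small (e.g.\ $\beta_{1}>0$ and $r\le\kappa$). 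The source must be absorbed by a term that is itself $O(\kappa^{\nu-\beta_{1}}r^{\beta_{1}})$ on $\rho/\kappa\le r\le\kappa$; the paper uses $-\frac{A}{4-\nu^{2}}r^{2}$ (since $\square_\nu r^{2}=\nu^{2}-4<0$, this negative multiple produces $+A$, and it is $O(r^{2})$, hence harmless in the conclusion), with its negativity compensated on the boundary by harmonic terms. Second, and relatedly, your barriers do not dominate the lateral data for all times: at $r=1$ one has $\Phi_{\text{out}}(1,t)=CA\bigl(1+c_{1}(t-\tau)\bigr)^{-(\nu-\beta_{1})/2}\to 0$ as $t\to\infty$, so it does not majorize $f(1,t)\le A$, and the same issue arises at $r=\rho$; in your construction this domination is silently carried by the constant, which you must remove. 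The missing device is the pair of stationary $\square_\nu$-harmonics $r^{\nu}$ and $(\rho/r)^{\nu}$: the term $2A\bigl((\rho/r)^{\nu}+r^{\nu}\bigr)$ dominates both lateral data for all $t$, yet satisfies $(\rho/r)^{\nu}\le\kappa^{\nu-\beta_{0}}(\rho/r)^{\beta_{0}}$ and $r^{\nu}\le\kappa^{\nu-\beta_{1}}r^{\beta_{1}}$ in the stated range of $r$, which is exactly how the boundary contributions acquire the $\kappa$-gains in the paper.

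Third, the inner barrier $\Phi_{\text{in}}$, which you yourself flag as the main obstacle (especially for $\beta_{0}\le 0$), is genuinely missing, and it is precisely where the paper does something different: instead of explicit self-similar barriers for the initial data, the paper sets $\bar v=r^{\nu}v_{0}+2A\bigl((\rho/r)^{\nu}+r^{\nu}\bigr)-\frac{A}{4-\nu^{2}}r^{2}$, where $v_{0}$ solves the Dirichlet problem for $\partial_t-\Delta_{2(\nu+1)}$ on $[\rho,1]$ with initial datum $r^{-\nu}f(r,\tau)$ and zero lateral data, and then invokes the radial heat-kernel bound of Proposition \ref{initialestimate} to get $r^{\nu}v_{0}\lesssim A\bigl((\rho/r)^{\beta_{0}}w^{\beta_{0}+\nu}(r,t-\tau)+r^{\beta_{1}}w^{\nu-\beta_{1}}(r,t-\tau)\bigr)$, which at times $t-\tau\ge r^{2}/\kappa^{2}$ yields the required decay uniformly over the full range $-\nu<\beta_{0}<\nu$, with no case distinction. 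To complete your route you would either have to produce an inner mollified barrier with the correct monotonicity (a naive reflection of $\Phi_{\text{out}}$ grows in time and gives $\kappa^{-(\nu-\beta_{0})}$ rather than a gain), or simply adopt the kernel estimate for the initial-data piece as the paper does; together with the two corrections above (harmonic boundary terms and the $r^{2}$ source term), that recovers the proposition.
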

\begin{proof} Let $\mu = 2 \nu + 2.$ Using the results of Appendix \ref{app:initialkernel} and (\ref{squaretriangle}-\ref{squarerbeta}), we can construct a supersolution for (\ref{festimate:equation}) of the form
$$\bar{v}(r,t) = r^\nu  v_0(r,t) + 2 A \left( \left( \frac{\rho}{r} \right)^\nu + r^\nu \right) - \frac{A}{4 - \nu^2} r^{2},$$
where
$$\left( \p_t - \Delta_{\mu} \right) v_0 = 0,$$
$$v_0(r,0) = r^{-\nu} f(r,0), \qquad v_0(\rho,t) = 0 = v_0(1,t).$$
Applying the comparison principle to (\ref{festimate:equation}), we have
\begin{equation}\label{festimate:maxprin}
g(r,t) \leq \bar{v}(r,t),
\end{equation}
so it suffices to check (\ref{festimate:conclusion}) for $\bar{v}.$

By Proposition \ref{initialestimate} and (\ref{festimate:assumptions}), we have
\begin{equation*}
v_0(r,t) \leq C A \left( \frac{\rho^{\beta_0} }{r^{\beta_0 + \nu} }w^{\beta_0 + \nu} \left( r,t - \tau\right) + r^{\beta_1 - \nu} w^{\nu - \beta_1 }(r,t - \tau) \right),
\end{equation*}
where $w^a(r,t)$ is defined by
$$w^a(r,t) = \left( \frac{r^2}{r^2 + t} \right)^{a/2}.$$
Overall, from (\ref{festimate:maxprin}), we obtain
\begin{equation}\label{festimate:flongest}
| f(r,t) | \leq CA \left( \left( \frac{\rho}{r} \right)^{\beta_0} w^{\beta_0 + \nu}(r,t - \tau) + r^{\beta_1} w^{\nu - \beta_1}(r,t - \tau) + \left( \frac{\rho}{r} \right)^\nu  + r^\nu + r^{2} \right).
\end{equation}
For $r \geq \rho/ \kappa,$ we have
$$\left( \frac{\rho}{r} \right)^\nu \leq \kappa^{\nu - \beta_0} \left( \frac{\rho}{r} \right)^{\beta_0}.$$
For $r \leq \kappa$ and $t - \tau \geq r^2 / \kappa^2,$ we have
$$ w^{\beta_0 + \nu}(r,t - \tau) \leq \kappa^{\beta_0 + \nu} , \qquad w^{\nu - \beta_1}(r,t - \tau) \leq \kappa^{\nu - \beta_1}, \qquad r^\nu \leq \kappa^{\nu - \beta_1} r^{\beta_1}.$$
Substituting into (\ref{festimate:flongest}), we obtain (\ref{festimate:conclusion}).
\end{proof}

\begin{prop}\label{gestimate} Let $- \nu \leq \gamma_i \leq \nu \leq 1$ and $\beta_i$ with $|1 \pm \beta_i| \neq \nu,$ for $i = 0,1.$ Given $0 < \kappa \leq 1/2,$ let $\rho \leq \rho_1 \leq \kappa.$

Suppose that $g(r,t)$ is continuous on $\LB \rho_1, 1 \RB \times \LB \tau, T \right)$ and satisfies
\begin{equation}\label{gestimate:equation}
\square_\nu \! \left( \frac{g}{r} \right) \leq \frac{A}{r^3} \left( \left(\frac{\rho}{r}\right)^{\beta_0} + r^{\beta_1} \right),
\end{equation}
with
\begin{equation}\label{gestimate:assumptions}
\begin{split}
|g(r,\tau)| & \leq A \left( \left( \frac{\rho}{r}\right)^{\gamma_0} + r^{\gamma_1} \right), \\
\int_{\tau}^{T} \!\! g(\rho_1, t)^2\, dt & \leq B^2, \qquad | g(1,t) | \leq A.
\end{split}
\end{equation}
Then, 
for
\begin{equation}\label{gestimate:rtassumptions}
2 \rho_1 \leq r \leq \kappa \quad \text{and} \quad \tau + \frac{r^2}{\kappa^2} \leq t < T,
\end{equation}
we have
\begin{equation}\label{gestimate:estimate}
| g(r,t) | \leq C_{\ref{gestimate}} \left( B \frac{\rho_1^{\nu - 1}}{r^\nu}+ A \left( \kappa^{\nu + \gamma_0 + 1}  \left( \frac{\rho}{r} \right)^{\gamma_0} + \kappa^{\nu - \gamma_1 + 1} r^{\gamma_1} + \left( \frac{\rho}{r} \right)^{\beta_0} + r^{\beta_1} \right)\right).
\end{equation}
Here, $C_{\ref{gestimate}}$ depends on $\gamma_0, \gamma_1, \beta_0, \beta_1,$ and $\nu.$
\end{prop}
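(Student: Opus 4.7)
The proof follows the supersolution / comparison-principle strategy of Proposition \ref{festimate}. Setting $h = g/r$, I would construct a supersolution $\bar h$ for (\ref{gestimate:equation}) as the sum of four pieces handling, respectively, the initial data $h(r, \tau)$, the source term on the right-hand side, the outer Dirichlet value $h(1, t)$, and the inner Dirichlet value $h(\rho_1, t) = g(\rho_1, t)/\rho_1$, which is controlled only in $L^2_t$ by (\ref{gestimate:assumptions}). The parabolic maximum principle then yields $|h| \le \bar h$, and multiplying by $r$ at the end produces (\ref{gestimate:estimate}).

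Three of the four pieces mirror calculations already present in Proposition \ref{festimate}. The initial-data piece uses the substitution (\ref{squaretriangle}): set $h_{\mathrm{init}} = r^\nu v_0$ where $v_0$ solves $(\p_t - \De_{2(\nu + 1)})v_0 = 0$ with zero lateral data and $v_0(r, \tau) = r^{-\nu - 1}|g(r, \tau)|$, then invoke Proposition \ref{initialestimate} to obtain self-similar factors $w^{\nu + \gamma_0 + 1}$ and $w^{\nu - \gamma_1 + 1}$; these are dominated by $\kappa^{\nu + \gamma_0 + 1}$ and $\kappa^{\nu - \gamma_1 + 1}$ under (\ref{gestimate:rtassumptions}), yielding the two $\kappa$-prefactored terms of (\ref{gestimate:estimate}). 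The source-term piece uses stationary supersolutions built from powers of $r$ via $\square_\nu r^\beta = (\nu^2 - \beta^2) r^{\beta - 2}$, supplemented by a Green's-function construction on $[\rho_1, 1]$ to handle the resonant cases $|\beta_i \pm 1| = \nu$; these give the $(\rho/r)^{\beta_0}$ and $r^{\beta_1}$ terms of (\ref{gestimate:estimate}). The outer-boundary piece uses the harmonic barrier $A r^\nu$, which after multiplication by $r$ is absorbed into the $A r^{\beta_1}$ term.

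The substantive new element is the inner-boundary piece, where only the $L^2_t$ bound on $g(\rho_1, \cdot)$ is available, ruling out the naive pointwise barrier $\sup_t|g(\rho_1, t)/\rho_1| \cdot (\rho_1/r)^\nu$. Instead, I would represent $\bar h_{\partial \rho_1}$ by Duhamel's formula with respect to the Poisson kernel $P_\nu$ of $\square_\nu$ on the annulus with vanishing outer data:
\[
\bar h_{\partial \rho_1}(r, t) \;=\; \int_\tau^t P_\nu(r, \rho_1, t - s) \, \frac{|g(\rho_1, s)|}{\rho_1} \, ds.
\]
At interior points $r \ge 2\rho_1$, the kernel $P_\nu$ carries both the stationary far-field decay $(\rho_1/r)^\nu$ from the harmonic profile and a parabolic time-width of order $(r - \rho_1)^2$; Cauchy--Schwarz in $s$ together with (\ref{gestimate:assumptions}) then gives $|\bar h_{\partial \rho_1}(r, t)| \le C B \rho_1^{\nu - 1}/r^{\nu + 1}$, which after multiplication by $r$ produces the first term of (\ref{gestimate:estimate}).

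The hardest part is the precise estimation of $P_\nu$, or equivalently establishing the sharp $L^2_t$-to-$L^\infty_t$ boundary-to-interior bound for $\square_\nu$ on the annulus with the correct $(\rho_1/r)^\nu$ decay. A more robust alternative that avoids constructing $P_\nu$ explicitly is a dyadic-shell argument: on each annulus $\{2^k \rho_1 \le r \le 2^{k + 1} \rho_1\}$, use (\ref{gestimate:equation}) together with a parabolic Caccioppoli estimate to propagate the $L^2_t$ control outward with geometric ratio $\sim 2^{-\nu k}$ dictated by the spectral gap of $-\De_{2(\nu + 1)}$, and then upgrade from $L^2_t$ to $L^\infty_t$ by parabolic regularity applied on a space-time neighborhood of sizes $r$ and $r^2$, which is exactly what (\ref{gestimate:rtassumptions}) makes available.
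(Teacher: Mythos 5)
Your proposal follows essentially the same route as the paper: a supersolution assembled from an initial-data piece (via the substitution (\ref{squaretriangle}) and Proposition \ref{initialestimate}), explicit power-law barriers for the source and outer boundary, and a Duhamel/Cauchy--Schwarz treatment of the $L^2_t$ inner-boundary data, all combined by the comparison principle and multiplied back by $r$. The boundary-to-interior kernel bound you single out as the hardest step is exactly Proposition \ref{innerestimate} of the paper's appendix (after conjugating by $r^\nu$ to reduce $\square_\nu$ to $\p_t - \Delta_{2(\nu+1)}$), so no separate construction of $P_\nu$ or dyadic-shell argument is needed.
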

\begin{proof} As in the previous proof, we construct a supersolution for (\ref{gestimate:equation}) of the form
$$\bar{v}(r,t) = r^\nu \left( v_1(r,t) + v_2(r,t) \right) + C A \left( \frac{\rho^{\beta_0}}{r^{\beta_0 + 1}} + r^{\beta_1 - 1} \right),$$
where
$$\left( \p_t - \Delta_\mu \right) v_i = 0, \quad i = 1,2,$$
with
$$v_1(r,0) = r^{-\nu - 1} g(r,0), \qquad v_1(\rho,t) = 0 = v_1(1,t),$$
and
$$v_2(\rho_1, t) = \rho_1^{-\nu - 1} g(\rho_1, t), \qquad v_2(r,0) = 0= v_2(1,t).$$
Applying the comparison principle to (\ref{gestimate:equation}), we have
\begin{equation}\label{gestimate:maxprin}
g(r,t) \leq r \bar{v}(r,t).
\end{equation}
By Proposition \ref{innerestimate} and (\ref{gestimate:assumptions}), we have
\begin{equation*}
v_1(r,t) \leq C A \left( \frac{\rho_1^{\gamma_0}}{r^{\gamma_0 + \nu + 1} }w^{\gamma_0 + \nu + 1} \left( r,t \right) + r^{\gamma_1 - \nu - 1} w^{\nu - \gamma_1 + 1}(r,t) \right).
\end{equation*}
By Proposition \ref{innerestimate} and (\ref{gestimate:assumptions}), since $\mu - 2 = 2\nu,$ we have
\begin{equation*}
v_2(r,t) \leq C B \rho_1^{-\nu - 1} \left( \frac{\rho_1^{2 \nu}}{r^{2 \nu + 1}} \right) \leq CB \frac{\rho_1^{\nu - 1}}{r^{2\nu + 1}}.
\end{equation*}
Overall, from (\ref{gestimate:maxprin}), we obtain
\begin{equation*}
| g(r,t) | \leq C\left( \frac{B \rho_1^{\nu - 1}}{r^\nu} + A \left( \left( \frac{\rho_1}{r} \right)^{\gamma_0} w^{\gamma_0 + \nu + 1}(r,t) +  r^{\gamma_1} w^{\nu - \gamma_1 + 1}(r,t) + \left( \frac{\rho}{r} \right)^{\beta_0} + r^{\beta_1} \right) \right).
\end{equation*}
As in the previous proof, under the assumptions (\ref{gestimate:rtassumptions}) on $r$ and $t,$ this implies (\ref{gestimate:estimate}).
\end{proof}

\section{Main Theorem}\label{sec:mainthm}

Fix $x_0 \in M$ and let $r_g(x) = \dist_g(x_0, x).$ We shall denote the geodesic ball of radius $R$ centered at $x_0$ by $B_R(x_0),$ or more typically by $B_R.$ An annulus will be denoted by $U_\rho^R = B_R \setminus \bar{B}_\rho.$

\begin{defn}[\cite{songwaldron}, Definition 4.2] Given a $W^{1,2}$ map $u: B_R(x_0) \to N,$ the \emph{outer energy scale} $\lambda_{\eps, R, x_0}(u)$ is the smallest nonnegative number $\lambda$ such that
\begin{equation}\label{energyscale}
\sup_{\lambda < \rho < R} \int_{U^{\rho}_{\rho/2}(x_0)} |du|^2 \, dV < \eps.
\end{equation}
Note that $\lambda = R$ satisfies (\ref{energyscale}) vacuously, so $0 \leq \lambda_{\eps, R, x_0}(u) \leq R$ by definition.
\end{defn}
We first establish the following ``baby case'' of the main theorem. 

\begin{lemma}\label{basecase} Given $E, \lambda> 0$ and $0 < \eps < \eps_0,$ there exists $\delta > 0$ as follows. Suppose that $u$ is a smooth solution of (\ref{hm}) on $B_R \times ( - R^2, T),$ with $0 < R < R_0$ and $T > 0,$ which satisfies
\begin{equation}\label{basecase:energyassumption}
\sup_{-R^2<t<T} \int_{B_R} |du(t)|^2 \, dV \leq E,
\end{equation}
\begin{equation}\label{basecase:deltaassumption}
\int_{-R^2}^T \!\! \int_{B_R} |\T|^2 \, dV dt < \delta^2,
\end{equation}
and
\begin{equation}\label{basecase:lambdaassumption}
\sup_{-R^2<t<T} \lambda_{\eps, R, x_0}(u(t)) \leq R \lambda.
\end{equation}
Then for $2 R \lambda \leq r_g(x) \leq R/2$ and each integer $k \geq 0,$ we have
\begin{equation}\label{basecase:est}
r_g(x)^{1 + k} |\nabla^{(k)} du(x,0)| \leq C_k \sqrt{\eps} \left( \frac{R \lambda}{r_g(x)} + \frac{r_g(x)}{R} \right).
\end{equation}
Here, $R_0 > 0$ depends on the geometry of $M,$ $\eps_0 > 0$ depends on the geometry of $N,$ and $\delta$ depends on $E,\eps,$ and $\lambda.$
\end{lemma}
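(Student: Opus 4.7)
The plan is to combine pointwise $\eps$-regularity with the parabolic estimates of Lemma~\ref{lemma:evolutions} and Propositions~\ref{festimate}--\ref{gestimate}, treating the angular and radial components of $du$ in turn and coupling them through the stress-energy identity \eqref{stressidentity}.

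After rescaling so that $R=1$, the hypotheses \eqref{basecase:energyassumption}--\eqref{basecase:lambdaassumption} allow one to invoke standard parabolic $\eps$-regularity for \eqref{hm}---with $\delta$ taken small enough to absorb the tension term on each annulus---and conclude the uniform pointwise derivative bound
\[
r^{1+k}\,|\nabla^{(k)}du(x,t)| \;\le\; C_k\sqrt{\eps}
\]
on $\{2\lambda \le r \le 1/2\}\times [-1/4,T)$. This activates Lemma~\ref{lemma:evolutions} with $\eta = C\sqrt{\eps}$, while the metric constant $\xi_0$ is at our disposal through $R_0$. With the differential inequality $\square_\nu f \le C\xi_0\eta$ in hand, I would apply Proposition~\ref{festimate} to the angular energy $f$, using the bound from the previous step as both spatial boundary and initial data. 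Choosing exponents $\beta_i$ just below $\nu = \sqrt{1-C\eta}$ (itself close to $1$ for $\eps$ small), this yields an angular decay of the shape
\[
f(r,t) \;\le\; C\sqrt{\eps}\bigl((\lambda/r)^\beta + r^\beta\bigr)
\]
in the neck at $t=0$.

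For the radial energy $g$, I would integrate the stress identity \eqref{stressidentity} in time on $[-1,T)$ at a convenient intermediate radius $\rho_1$. Bounding the tension contribution via Cauchy--Schwarz using \eqref{basecase:deltaassumption} together with the pointwise bound on $r|du|$, and combining with the angular decay for $\tilde f(\rho_1,\cdot)$, supplies the $L^2$-in-time boundary input $\int_{-1}^T g(\rho_1,t)^2\,dt \le B^2$ required by Proposition~\ref{gestimate}. Applied to $g$ with the source term of \eqref{evolutions:radial} controlled by the preceding angular bound, this proposition yields a matching pointwise decay for $g$. The pointwise control of $du$ at $t=0$ then follows from the combined angular and radial bounds (using Sobolev embedding on $S^1$ and parabolic regularity to pass from $L^2$-in-$\theta$ to $L^\infty$), giving the $k=0$ case of \eqref{basecase:est}; higher $k$ follow by standard parabolic Schauder estimates applied to \eqref{hm}.

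The main technical obstacle I anticipate is extracting the sharp \emph{linear} factor $R\lambda/r + r/R$ asserted in \eqref{basecase:est}, since Propositions~\ref{festimate}--\ref{gestimate} only produce a $\beta$-H\"older decay with $\beta_i$ strictly less than $\nu < 1$. This is reconciled by exploiting that $\nu\to 1$ as $\eta = C\sqrt\eps \to 0$, so that $\beta$ may be taken arbitrarily close to $1$ with the loss absorbed into the constant $C_k$; a secondary check is to make sure the uniform initial data genuinely fits into the decaying initial-data template of Proposition~\ref{festimate}, which is where one has to trade a factor of $\rho^{-\beta/2}$ against $\kappa^{\nu-\beta}$ in the choice of exponents.
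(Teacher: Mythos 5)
Your outline matches the paper's only at the start (rescale, $\eps$-regularity, Lemma \ref{lemma:evolutions}, then Proposition \ref{festimate} for $f$), but both of your key steps have genuine gaps. First, the angular step: with only the uniform bound $f\le C\sqrt\eps$ as initial data at $t=\tau$, you cannot take $\beta_i$ ``just below $\nu$'' in Proposition \ref{festimate}, because fitting constant data under the template $A\bigl((\rho/r)^{\beta_0}+r^{\beta_1}\bigr)$ costs a factor of order $\rho^{-\beta/2}$ (the template dips to $\sim\rho^{\beta/2}$ at $r\sim\sqrt\rho$), and your proposed trade against $\kappa^{\nu-\beta}$ cannot repair this since $\kappa$ is a fixed constant while $\rho=\lambda$ may be small; the resulting constant would depend on $\lambda$, which destroys the universality of $C_k$ that the paper needs when this lemma seeds the induction in Theorem \ref{thm:mainthm}. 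The paper instead takes $\beta_0=\beta_1=0$, uses the \emph{proof} of Proposition \ref{festimate} (the intermediate bound \eqref{festimate:flongest}), and gets the decay from kernel smoothing over a fixed waiting time: for $t-\tau\ge 1/4$ one has $w^{a}(r,t-\tau)\le Cr^{a}$, so even constant data contributes $Cr^\nu$, yielding $f\le C\sqrt\eps\bigl((\lambda/r)^\nu+r^\nu\bigr)$ with universal constants. This mechanism is absent from your write-up.

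Second, the radial step: the paper does not use Proposition \ref{gestimate} here at all, and your route through it would fail to give two-sided decay on the whole neck. With uniform initial data for $g$ you are forced to $\gamma_i=0$, so the initial-data terms in \eqref{gestimate:estimate} give no decay, and the boundary term $B\rho_1^{\nu-1}/r^\nu$ with $\rho_1$ comparable to $\lambda$ and $B\sim\sqrt\eps$ is of size $\sqrt\eps\,\lambda^{\nu-1}r^{-\nu}$, which near $r\sim\lambda$ is worse than the trivial bound (in Theorem \ref{thm:mainthm} this device works only because the estimate is needed in a thin band $r\sim\zeta$ and $\rho_1$ is a carefully tuned power of $\zeta$). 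The paper's actual argument is simpler: integrate \eqref{stressidentity} in time at \emph{every} radius $r\in[2\lambda,1/2]$ to get $\int g^2\,dt\le\int f^2\,dt+C\delta\sqrt E$, choose $\delta$ small depending on $E,\eps,\lambda$ so the error is dominated by $\eps\bigl((\lambda/r)^\nu+r^\nu\bigr)^2$, and then feed the resulting space--time $L^2$ bound for $|du|^2$ on annuli into parabolic $\eps$-regularity, which produces \eqref{basecase:est} at $t=0$ for all $k\ge0$ in one stroke (no Sobolev-on-$S^1$/Schauder step). Finally, your claim that the loss from exponent $\beta<\nu<1$ can be absorbed into $C_k$ because $\nu\to1$ as $\eps\to0$ is not valid for fixed $\eps$: at $r\sim\sqrt\lambda$ the ratio of $(\lambda/r)^\nu$ to $\lambda/r+r$ is $\lambda^{(\nu-1)/2}\to\infty$ as $\lambda\to0$. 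The paper itself only proves the estimate with exponent $\nu$ and defers the sharpening to $\nu=1$ to a bootstrap supersolution argument (using the improved bound in place of the crude one), noting that the sharp form is not needed downstream.
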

\begin{proof} 
Assuming that $R_0$ is sufficiently small, we may rescale so that $R = 1$ and our metric $g$ takes the form (\ref{conformalmetric}-\ref{smallness1}), with $\xi_0 \leq \frac12.$ By (\ref{rversusdist}), it suffices to establish (\ref{basecase:est}) with the conformal coordinate $r$ in place of $r_g.$

In view of (\ref{basecase:lambdaassumption}), the standard $\eps$-regularity lemma (see {\it e.g.} Theorem 3.4 of \cite{songwaldron}) implies
\begin{equation}\label{basecase:crudestimate}
\sup_{\lambda \leq r \leq R} \left( r |du| + r^2 |\nabla du| + r^3 |\nabla^2 du | + r^4 |\nabla^3 du | \right) \leq C \sqrt{\eps}.
\end{equation}
Hence, by Lemma \ref{lemma:evolutions}, $f$ satisfies an evolution equation
$$\square_\nu f \leq C \sqrt{\eps},$$
where $\nu = \sqrt{1 - C \sqrt{\eps}}.$
We can apply Proposition \ref{festimate}  with $\rho = \lambda,$ $A = C \sqrt{\eps},$ $\beta_0 = \beta_1 = 0,$ and $\tau = -\frac12.$ Since $w^a(r, s) \leq C r^a$ for $s \geq \frac14,$ (\ref{festimate:flongest}) gives
\begin{equation}\label{basecase:improvedfestimate}
\begin{split}
\sup_{-\frac14 \leq t \leq 0} | f(r,t) | & \leq C\sqrt{\eps} \left( r^{\nu} + r^{\beta_1} r^{\nu - \beta_1} + \left( \frac{\lambda}{r} \right)^\nu + r^\nu + r^{2} \right) \\
& \leq C \sqrt{\eps} \left( \left( \frac{\lambda}{r} \right)^\nu  + r^\nu \right).
\end{split}
\end{equation}
Applying H\"older's inequality identity to (\ref{stressidentity}), we have
\begin{equation}\label{basecase:holder}
\begin{split}
\left| \int_{-\frac12}^{0} \left( g^2(r, t) - f^2(r, t) \right) \, dt \right| 
& = \left| \int_{t_0}^{t_1} \!\!\!\!\! \int_{S^1_{\rho_1}} X^i X^j S_{ij} (x,t) \, d\theta dt \right| \\
& \leq \left( \int_{t_0}^{t_1} \!\!\!\!\! \int_{D_{\rho_1}} |\T(u)|^2 \, dV dt  \right)^{\frac12} \left( \int_{t_0}^{t_1} \!\!\!\!\!  \int_{D_{\rho_1}} r'^2 \, |du|^2 \, dV_{r'} dt \right)^{\frac12} \\
& \leq C \delta \sqrt{E}.
\end{split}
\end{equation}
Since $|du|^2 \leq f^2 + g^2,$ for $\delta$ sufficiently small, (\ref{basecase:improvedfestimate}-\ref{basecase:holder}) imply
\begin{equation*}
\begin{split}
\int_{-\frac12}^0 \int_{r/2}^r |du|^2 \, dV dt & \leq C \eps \left( \left( \frac{\lambda}{r} \right)^\nu  + r^\nu \right)^2.
\end{split}
\end{equation*}
Applying $\eps$-regularity (Theorem 3.4 of \cite{songwaldron}) again, we obtain
\begin{equation}\label{basecase:improvedestimate}
\begin{split}
r^{1 + k} |\nabla^{(k)} du(x,0)| & \leq C_k \sqrt{\eps} \left( \left( \frac{\lambda}{r} \right)^\nu  + r^\nu \right).
\end{split}
\end{equation}
To obtain the same estimate with $\nu = 1,$ one can apply a supersolution argument as in Propositions \ref{festimate}-\ref{gestimate}, with (\ref{basecase:improvedestimate}) in place of (\ref{basecase:crudestimate}). Since the sharp result will not be used below, we omit the proof. 
\end{proof}

\begin{thm}\label{thm:mainthm} 
Given $E > 0,$ $0 < \eps < \eps_0,$ and $0 < \alpha \leq 1,$ there exists $\delta_0 > 0$ as follows.  
Suppose that $u : B_R \times \left( -R^2, T \right) \to N$ is a smooth solution of (\ref{hm}), with $0 < R < R_0$ and $T > 0,$ which satisfies
\begin{equation}\label{mainthm:energyassumption}
\sup_{-R^2<t<T} \int_{B_R} |du(t)|^2 \, dV \leq E
\end{equation}
and
\begin{equation}\label{mainthm:deltaassumption}
\int_{-R^2}^T \!\! \int_{B_R} |\T|^2 \, dV dt < \delta_0^2.
\end{equation}
Suppose further that for some $0 < \lambda_1 \leq \frac12$ and $0 \leq t_1 < T,$ we have
\begin{equation}\label{mainthm:lambdaassumption}
\lambda_{\eps, R, x_0}(u(t)) \leq R \left( \lambda_1 + \left( \frac{t_1 - t}{R^2} \right)^{\frac{1 + \alpha}{2}} \right)
\end{equation}
for all $-R^2 < t \leq t_1.$ 
Then for $2 R \lambda_1 \leq r_g(x) \leq R / 2$ and each integer $k \geq 0,$ we have
\begin{equation}\label{mainthm:est}
r_g(x)^{1 + k} |\nabla^{(k)} du (x,t_1)| \leq C_{k,\alpha} \sqrt{\eps} \left( \frac{R\lambda_1}{r_g(x)} + \left( \frac{r_g(x)}{R} \right)^\alpha \right)^{\frac13 }.
\end{equation}
Here, $R_0 > 0$ depends on the geometry of $M,$ $\eps_0 > 0$ depends on the geometry of $N,$ and $\delta_0$ depends on $E,\eps,$ and $\alpha.$ 
\end{thm}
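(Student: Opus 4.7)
The plan is to extend the argument of Lemma \ref{basecase} to the time-varying outer scale by choosing an inner boundary radius $\rho_1$ and time-window $\tau$ that respect the strict type-II scaling (\ref{mainthm:lambdaassumption}). After rescaling $R = 1$, for a target radius $r \in [2\lambda_1, 1/2]$ I would set $\kappa = C(\lambda_1/r + r^\alpha)^{1/3}$ and choose $\rho_1 = \kappa r$ and $\tau = r^2/\kappa^2$. Because $\alpha \leq 1$, these satisfy the type-II constraint $\lambda_1 + \tau^{(1+\alpha)/2} \lesssim \rho_1$, the annular constraint $\rho_1 \leq r/2$, and the smallness $\tau \leq 1$. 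In particular the outer energy scale remains below $\rho_1$ throughout $[t_1 - \tau, t_1]$, so $\eps$-regularity (\cite{songwaldron}, Theorem 3.4) yields the crude pointwise bound $r'|du|+r'^2|\nabla du|+r'^3|\nabla^2 du| \leq C\sqrt{\eps}$ on the annulus $\{\rho_1 \leq r' \leq 1\} \times [t_1 - \tau, t_1]$.

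Lemma \ref{lemma:evolutions} then supplies $\square_\nu f \leq C\sqrt{\eps}$ and $\square_\nu(g/r) \leq 6f/r^3 + C\sqrt{\eps}/r$, with $\nu = \sqrt{1 - C\sqrt{\eps}}$. Applying the full supersolution bound (\ref{festimate:flongest}) from the proof of Proposition \ref{festimate}, with $A = C\sqrt{\eps}$ and $\beta_0 = \beta_1 = 0$, one obtains $f(r, t_1) \leq C\sqrt{\eps}\,\kappa^\nu$, using $w^\nu(r, \tau) \leq (r^2/\tau)^{\nu/2} = \kappa^\nu$ and $\rho_1/r = \kappa$. For the radial component, the stress-energy identity (\ref{stressidentity}) at radius $\rho_1$, integrated in time and combined with H\"older's inequality and (\ref{mainthm:deltaassumption}) as in (\ref{basecase:holder}), yields
\[
\left|\int_{t_1 - \tau}^{t_1}\!\bigl(g^2(\rho_1, t) - f^2(\rho_1, t)\bigr)\, dt\right| \leq C\delta_0\sqrt{E},
\]
providing a boundary $L^2$-in-time bound $B$ on $g(\rho_1, \cdot)$. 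Feeding $B$ and the angular forcing of the previous step into Proposition \ref{gestimate} (with $\gamma_i, \beta_i = 0$) then yields $g(r, t_1) \leq C\sqrt{\eps}\,\kappa^\nu$, provided $\delta_0$ is chosen sufficiently small in terms of $\eps, \alpha$.

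Combining the pointwise bounds on $f$ and $g$ on a parabolic sub-cylinder near $(x, t_1)$ gives $\iint |du|^2 \, dV\, dt \lesssim \eps \, \kappa^{2\nu}\cdot\mathrm{vol}$, and one last application of $\eps$-regularity delivers $r|du(x, t_1)| \leq C\sqrt{\eps}\,\kappa^\nu = C\sqrt{\eps}(\lambda_1/r + r^\alpha)^{\nu/3}$. Since $\nu \to 1$ as $\eps \to 0$ and the sharpening-to-$\nu = 1$ argument indicated at the end of Lemma \ref{basecase} applies verbatim, one recovers the stated exponent and hence (\ref{mainthm:est}). The main obstacle is the joint choice of $(\rho_1, \tau, \kappa)$: $\tau$ must be large enough for Propositions \ref{festimate}--\ref{gestimate} to take effect ($\tau \gtrsim r^2/\kappa^2$), yet small enough that $\lambda(t_1 - \tau) \lesssim \tau^{(1+\alpha)/2}$ stays below $\rho_1 \lesssim \kappa r$. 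Eliminating $\tau$ produces the constraint $\kappa^{2+\alpha} \gtrsim r^\alpha + \lambda_1 \kappa^{1+\alpha}/r$, which---thanks to $\alpha \leq 1$---admits the clean solution $\kappa^3 \sim \lambda_1/r + r^\alpha$; this balance is precisely the source of the cube root in (\ref{mainthm:est}).
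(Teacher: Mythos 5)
There is a genuine gap, and it is structural rather than cosmetic: your argument is a single-scale replication of Lemma \ref{basecase} at the scale $r$, but the paper's theorem cannot be closed that way because the constant $\delta_0$ must be independent of $\lambda_1$ (equivalently of $r$), and your two inputs at the inner boundary $\rho_1=\kappa r$ do not scale. Concretely, the stress-identity step gives you only $\bigl|\int (g^2-f^2)(\rho_1,t)\,dt\bigr|\le C\delta_0\sqrt{E}$ (a bound that does not shrink with $r$), and the crude $\eps$-regularity bound gives only $f(\rho_1,t)\le C\sqrt{\eps}$, so $\int_{t_1-\tau}^{t_1} f^2(\rho_1,t)\,dt\lesssim \eps\, r^2/\kappa^2$. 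Feeding the resulting $B$ into Proposition \ref{gestimate}, the inner-boundary term is
\begin{equation*}
B\,\frac{\rho_1^{\nu-1}}{r^{\nu}}\;\approx\;\frac{\kappa^{\nu-1}}{r}\Bigl(\sqrt{\eps}\,\tfrac{r}{\kappa}+\sqrt{\delta_0\sqrt{E}}\Bigr)\;=\;\sqrt{\eps}\,\kappa^{\nu-2}+\kappa^{\nu-1}\frac{\sqrt{\delta_0\sqrt{E}}}{r},
\end{equation*}
which is not $O(\sqrt{\eps}\,\kappa^{\nu})$; the second piece even blows up as $r\to0$ for fixed $\delta_0$, so you would be forced to let $\delta_0$ depend on $\lambda_1$, exactly what the theorem forbids. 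A second, related failure: with $\beta_0=\beta_1=0$ the conclusion (\ref{gestimate:estimate}) contains the terms $(\rho/r)^{\beta_0}+r^{\beta_1}=2$ with no $\kappa$-gain, so Proposition \ref{gestimate} returns only $g\lesssim\sqrt{\eps}$, no decay at all. To gain anything for $g$ you need the forcing $6f/r^3$ to already decay at radii down to $\rho_1$ and over the whole time window, and you need the weighted energy $\iint_{B_{\rho_1}} r'^2|du|^2$ in the stress identity to carry a power of $r$ --- neither of which the crude bound provides.

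The paper supplies precisely these missing inputs by an induction on dyadic values $\lambda_1=2^{-n}\kappa^a$: one proves the sharper scale-invariant estimates (\ref{mainthm:fhypoth})--(\ref{mainthm:ghypoth}) with a constant uniform in $\lambda_1$, and in the inductive step the hypothesis at the previous scale gives $f(\rho_1,t)\le C\sqrt{\eps}(\rho/\rho_1)^{2\nu-1}$ and $r'|du|\le C\sqrt{\eps}(\rho/r'+r'^{\alpha})^{1/3}$ on all of $[t_0,t_1]$, which is what makes $\int f^2(\rho_1,\cdot)\,dt$ and the H\"older term in (\ref{mainthm:gfestimate1}) scale like $\zeta^{2+c\alpha}$ and lets one fixed $\delta_0$ work at every scale (the inner radius is taken as $\rho_1=\zeta^{1+\frac{9}{16}\alpha}$, not $\kappa r$). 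The exponent $\tfrac13$ is produced by this bookkeeping (the $\tfrac13$-power weighted-energy bound, the choice of $\rho_1$, and the $2\nu-1$ decay of $f$), not by the $\kappa^3\sim\lambda_1/r+r^{\alpha}$ balance you describe. Your proposal would need to be reorganized as a continuity/induction argument in the scale $\lambda_1$ (or an equivalent bootstrap across dyadic annuli) before the constants can be made uniform; as written, the key step does not close.
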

\begin{proof} 
For convenience, we replace (\ref{mainthm:lambdaassumption}) by
\begin{equation}\label{mainthm:kappathing}
\lambda_{\eps, R, x_0}(u(t)) \leq R \left( \lambda_1 + \kappa^{a} \left( \frac{t_1 - t}{R^2} \right)^{\frac{1 + \alpha}{2}} \right),
\end{equation}
where $0 < \kappa \leq \frac12$ is a constant 
to be determined by the argument below (depending only on $\alpha$ and $\nu$), and $a$ is sufficiently large, for instance
\begin{equation}\label{mainthm:athing}
a = \frac{72( 1 + \alpha) }{\alpha}.
\end{equation}
The assumptions (\ref{mainthm:kappathing}) and (\ref{mainthm:lambdaassumption}) are equivalent after rescaling and redefining constants. 

We will also prove the two estimates
\begin{equation}\label{mainthm:fhypoth}
 f(u;r,t_1) \leq C_0 \sqrt{\eps} \max \LB \frac{\lambda_1}{r}, r^{\alpha} \RB^{2 \nu - 1},
\end{equation}
\begin{equation}\label{mainthm:ghypoth}
g(u;r,t_1) \leq C_0 \sqrt{\eps} \max \LB \frac{\lambda_1}{r}, r^{\alpha} \RB^{\frac13},
\end{equation}
which clearly imply (\ref{mainthm:est}). Here, $\nu$ is any number with
\begin{equation}\label{mainthm:nucloseness}
\frac{26}{27} \leq \nu \leq \sqrt{1 - C \sqrt{\eps}} .
\end{equation}
It suffices to prove the theorem for $\lambda_1$ of the form
$$\lambda_1 = 2^{-n} \kappa^a,$$
so we may proceed by induction. For $\lambda_1 = \kappa^a,$
Lemma \ref{basecase} gives $\delta_0 = \delta > 0$ such that (\ref{mainthm:fhypoth}-\ref{mainthm:ghypoth}) hold, with $C_0 > 1$ universal. This establishes the base case. Note that we are free to assume $\kappa$ is arbitrarily small.


For the induction step, suppose that (\ref{mainthm:fhypoth}-\ref{mainthm:ghypoth}) hold for all $\lambda_1 \geq 2\bar{\lambda}_1,$ where $0 < \bar{\lambda}_1 \leq \kappa^a;$ {\it i.e.}, the conclusion of the theorem holds for all such $\lambda_1$ and solutions $u$ satisfying the hypotheses. We must establish the Theorem for $\lambda_1 = \bar{\lambda}_1.$

Let $u(t)$ be a solution satisfying the hypotheses, with $\lambda_1 = \bar{\lambda}_1.$ By rescaling, it suffices to assume $R = 1.$

In view of (\ref{mainthm:lambdaassumption}), 
the standard $\eps$-regularity lemma (see {\it e.g.} Theorem 3.4 of \cite{songwaldron}) implies
$$\sup_{\lambda(t) \leq r \leq R} \left( r |du| + r^2 |\nabla du| + r^3 |\nabla^2 du | + r^4 |\nabla^3 du |\right) \leq C \sqrt{\eps}.$$
Hence, by Lemma \ref{lemma:evolutions}, $f$ and $g$ satisfy evolution equations
\begin{equation*}
\begin{split}
\square_\nu f & \leq C \sqrt{\eps} \\
\square_\nu \left( \frac{g}{r} \right) & \leq \frac{6f}{r^3} + C \frac{\sqrt{\eps} }{r},
\end{split}
\end{equation*}
for $\lambda_1 + \kappa^{a} \left(t_1 - t \right)^{\frac{1 + \alpha}{2}} \leq r \leq 1.$ 
Let
\[
\begin{split}
\rho = 2 \bar{\lambda}_1, \qquad \zeta  = \rho^{\frac{1}{1 + \alpha}}.
\end{split}
\]
Since $\bar{\lambda}_1 \leq \kappa^a$ (by the base case), we have
\begin{equation}\label{mainthm:zetasmallness}
\frac{\rho}{\kappa} \leq \zeta \leq 2\kappa^{\frac{a}{1 + \alpha}}.
\end{equation}
Also notice that $\rho = \zeta^{1 + \alpha},$ so
\begin{equation}\label{mainthm:rhoandzeta}
\frac{\rho}{\zeta} = \zeta^\alpha.
\end{equation}
We now apply the induction hypotheses to $u$ with $R = 1/2,$ $\lambda_1 = \rho,$ and $R \rho = \bar{\lambda}_1.$ 
In view of (\ref{mainthm:rhoandzeta}), we clearly have (\ref{mainthm:fhypoth}-\ref{mainthm:ghypoth}) for $r \leq \zeta / 2.$ Applying the induction hypothesis again, with $R = 1$ and $\lambda_1 = \rho,$ we also obtain (\ref{mainthm:fhypoth}-\ref{mainthm:ghypoth}) for all $r \geq 2 \zeta.$ 
Hence, it remains only to establish (\ref{mainthm:fhypoth}-\ref{mainthm:ghypoth}) for 
\begin{equation}\label{mainthm:rrange}
\frac12 \zeta \leq r \leq 2 \zeta.
\end{equation}
In other words, for $r$ as in (\ref{mainthm:rrange}), we must show
\begin{equation}\label{mainthm:frrangetoshow}
f(r,t_1) \leq \frac{C_0}{2} \sqrt{\eps} \zeta^{\alpha (2 \nu -1)}
\end{equation}
and
\begin{equation}\label{mainthm:grrangetoshow}
g(r,t_1) \leq \frac{C_0}{2} \sqrt{\eps} \zeta^{\frac{\alpha}{3}}.
\end{equation}
Let
$$t_0 = t_1 - \frac{\zeta^2}{\kappa^2}.$$
From (\ref{mainthm:kappathing}), we have $\lambda(t) \leq \rho$ for all $t_0 \leq t \leq t_1.$ By the induction hypothesis, we have
\begin{equation}\label{mainthm:fappliedindhypoth}
f(r,t) \leq C \sqrt{\eps} \left( \frac{\rho}{r} + r^{\alpha} \right)^{2 \nu - 1}
\end{equation}
\begin{equation}\label{mainthm:gappliedindhypoth}
g(r,t) \leq C \sqrt{\eps} \left( \frac{\rho}{r} + r^{\alpha} \right)^{\frac13}
\end{equation}
for all $t_0 \leq t \leq t_1,$ where $C$ is a multiple of $C_0.$
Combining these, we also have
\begin{equation}\label{mainthm:energydecayhypoth}
r |du(x,t)| \leq C \sqrt{\eps} \left( \frac{\rho}{r} + r^{\alpha} \right)^{\frac13}
\end{equation}
for all $t_0 \leq t \leq t_1.$

To obtain the estimate (\ref{mainthm:frrangetoshow}) on $f,$ we apply Proposition \ref{festimate}. From (\ref{mainthm:fappliedindhypoth}-\ref{mainthm:gappliedindhypoth}), we obtain
\begin{equation*}
\begin{split}
\sup_{\zeta/2 \leq r \leq 2 \zeta} f(r,t_1) & \leq C C_{\ref{festimate}} \sqrt{\eps} \left( \kappa^{1 - \nu} \left( \frac{\rho}{\zeta} \right)^{2\nu - 1} + \kappa^{\nu - \alpha (2\nu - 1)} \zeta^{\alpha (2\nu - 1)} \right) \\
& \leq C \sqrt{\eps} \left(\kappa^{1 - \nu} + \kappa^{\nu - \alpha (2\nu - 1)} \right)\zeta^{\alpha (2\nu - 1)},
\end{split}
\end{equation*}
where we have used (\ref{mainthm:zetasmallness}-\ref{mainthm:rhoandzeta}).
Assuming that $\kappa$ is small enough that
$$C \sqrt{\eps} \left(\kappa^{1 - \nu} + \kappa^{\nu - \alpha (2\nu - 1)} \right) \leq \frac{C_0}{2},$$
this establishes the desired estimate (\ref{mainthm:frrangetoshow}) on $f.$\footnote{The decay estimate on $f$ can also be obtained directly from \cite{songwaldron}, Lemma 5.4. We have re-proven it here (by a different method) for the sake of exposition.}

Next, to obtain the estimate on $g,$ let
$$\rho_1 = \zeta^{1 + \frac{9}{16} \alpha}.$$
By the induction hypothesis, we have
\begin{equation*}
\sup_{t_0 \leq t \leq t_1} f(\rho_1, t) \leq C \sqrt{\eps} \left(\frac{\rho }{\rho_1 } \right)^{2\nu - 1} \leq C \sqrt{\eps}  \zeta^{\frac{7\cdot 25}{16 \cdot 27}\alpha } \leq C \sqrt{\eps} \zeta^{\frac{3}{8} \alpha}.
\end{equation*}
Using (\ref{mainthm:nucloseness}), we obtain
\begin{equation}\label{mainthm:ftimeest}
\int_{t_0}^{t_1} f(\rho_1, t)^2 \, dt 
\leq \frac{C \eps }{\kappa^2} \zeta^{2 + \frac{3}{4}\alpha }.
\end{equation}
We now integrate (\ref{stressidentity}) in time and apply H\"older's inequality:
\begin{equation}\label{mainthm:gfestimate1}
\begin{split}
\left| \int_{t_0}^{t_1} \left( g^2(\rho_1, t) - f^2(\rho_1, t) \right) \, dt \right| 
& = \left| \int_{t_0}^{t_1} \!\!\!\!\! \int_{S^1_{\rho_1}} X^i X^j S_{ij} (x,t) \, d\theta dt \right| \\
& \leq \left( \int_{t_0}^{t_1} \!\!\!\!\! \int_{D_{\rho_1}} |\T(u)|^2 \, dV dt  \right)^{\frac12} \left( \int_{t_0}^{t_1} \!\!\!\!\!  \int_{D_{\rho_1}} r'^2 \, |du|^2 \, dV_{r'} dt \right)^{\frac12} \\
& \leq C \delta_0 \left( (t_1 - t_0) \left( \rho^2 E + C \eps \rho_1^2 \left( \frac{\rho}{\rho_1} \right)^{\frac{2}{3} } \right) \right)^{\frac12}.
\end{split}
\end{equation}
Here we have used the assumptions (\ref{mainthm:energyassumption}-\ref{mainthm:deltaassumption}) and (\ref{mainthm:energydecayhypoth}).
We have
$$(t_1 - t_0) \rho^2 E = \frac{\zeta^{4 + 2 \alpha}}{\kappa^2} E$$
and 
$$(t_1 - t_0) \rho_1^2 \left( \frac{\rho}{\rho_1} \right)^{\frac{2}{3} \alpha} = \frac{\zeta^2}{\kappa^2} \zeta^{2 + \frac98 \alpha} \zeta^{\frac{7}{16} \frac23 \alpha} = \frac{ \zeta^{4 + \frac{17}{12} \alpha} }{\kappa^2}.$$
Hence, for $\delta_0$ sufficiently small (independently of $\bar{\lambda}_1$), (\ref{mainthm:gfestimate1}) reduces to
\begin{equation*}
\begin{split}
\left| \int_{t_0}^{t_1} \left( g^2(\rho_1, t) - f^2(\rho_1, t) \right) \, dt \right| 
& \leq \kappa \eps \zeta^{2 + \frac{17}{24} \alpha}.
\end{split}
\end{equation*}
Combining this with (\ref{mainthm:ftimeest}), we obtain
\begin{equation*}
\begin{split}
\int_{t_0}^{t_1} g(\rho_1, t)^2 \, dt & \leq \kappa \eps \zeta^{2 + \frac{17}{24} \alpha } + \frac{C \eps}{\kappa^2} \zeta^{2 + \frac{3}{4}\alpha } \\
& \leq C\eps  \left( \kappa + C \kappa^{\frac{1}{24}\frac{a \alpha}{1 + \alpha} - 2} \right) \zeta^{2 + \frac{17}{24} \alpha } \\
& \leq C \kappa \eps \zeta^{2 + \frac{17}{24}\alpha},
\end{split}
\end{equation*}
where we have used (\ref{mainthm:athing}) and (\ref{mainthm:zetasmallness}).

We may now apply Proposition \ref{gestimate}, to obtain
\begin{equation*}
\begin{split}
\sup_{\zeta/2 \leq r \leq 2 \zeta} g(r,t_1) & \leq C C_{\ref{gestimate}} \left( \begin{split}
& \frac{\sqrt{\kappa \eps} \zeta^{1 + \frac{17}{48} \alpha } }{\zeta^{1 + \frac{9}{16} \alpha} } \left( \frac{\zeta^{1 + \frac{9}{16}\alpha} }{\zeta} \right)^\nu + \sqrt{\eps} \left( \kappa^{\nu + \frac13 + 1} \left( \frac{\zeta^{1 + \alpha}}{\zeta}\right)^{\frac13} + \kappa^{\nu - \frac{\alpha}{3} + 1} \zeta^{\frac{\alpha}{3} } \right) \\
& \quad + \sqrt{\eps} \left( \frac{\zeta^{1 + \alpha} }{\zeta} + \zeta^{\alpha} \right)^{2 \nu - 1}
\end{split} \right) \\
& \leq C \left( \sqrt{\kappa \eps} \zeta^{\frac{\alpha}{48} \left( 17 + 27(\nu - 1) \right)} + \sqrt{\eps} \kappa \zeta^{ \frac{\alpha}{3} } + \sqrt{\eps} \zeta^{\frac{25}{27}\alpha} \right) \\
& \leq C \sqrt{\eps} \left( \sqrt{\kappa} + \kappa + \kappa^{\alpha \left( \frac{25a }{27(1 + \alpha)} - \frac13 \right)} \right) \zeta^{ \frac{\alpha}{3} },
\end{split}
\end{equation*}
where we have used (\ref{mainthm:nucloseness}) and (\ref{mainthm:zetasmallness}).
For $\kappa$ sufficiently small, this implies the desired bound (\ref{mainthm:grrangetoshow}), completing the induction.
\end{proof}

\begin{cor}[{\it Cf.} Theorem \ref{thm:intro}]\label{cor:detailedthm} Let $M$ be any Riemannian surface and suppose that $u$ is a classical solution of (\ref{hm}) on $M\times \LB 0, T \right)$
with bounded energy. Given $x_0 \in M$ and $0 < \eps < \eps_0,$ choose $0 < R < \min \LB R_0, \inj_{x_0}(M) \RB$ small enough that
\begin{equation}\label{lambdato0}
\qquad \lambda_{\eps, R, x_0} (u(t)) \to 0 \quad (t \nearrow T).
\end{equation}
If also
\begin{equation}\label{lambdaalphabound}
\lambda_{\eps, R, x_0} (u(t)) = O(T - t)^{\frac{1 + \alpha}{2}}, \qquad (t \nearrow T),
\end{equation}
where $0 < \alpha \leq 1,$ then $u(T)$ is $C^{\frac{\alpha}{3} }$ on $B_{R/2}(x_0).$
\end{cor}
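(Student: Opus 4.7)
The plan is to apply Theorem~\ref{thm:mainthm} at a sequence of times $t^* \nearrow T$ and pass to the limit, obtaining a pointwise gradient bound on the body map $u(T)$ which then integrates to the desired H\"older estimate. First I would verify the theorem's hypotheses. The energy bound is immediate. The finite total tension $\int_0^T \!\! \int_M |\T|^2 \, dV dt < \infty$ (from the standard energy identity) means, by absolute continuity,
$$\int_{t^* - R^2}^T \!\! \int_{B_R(x_0)} |\T|^2 \, dV dt \to 0 \quad \text{as } t^* \to T,$$
so this can be made smaller than $\delta_0^2$ by choosing $R$ small and $t^*$ close to $T$. Shrinking $R$ is harmless, since $\lambda_{\eps, R', x_0}(u(t)) \leq \lambda_{\eps, R, x_0}(u(t))$ for $R' \leq R$, and a H\"older estimate on a smaller ball extends to $B_{R/2}(x_0)$ by smoothness of $u(T)$ away from $x_0$. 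Finally, the subadditivity $(a+b)^{(1+\alpha)/2} \leq a^{(1+\alpha)/2} + b^{(1+\alpha)/2}$ (valid since $0 < \alpha \leq 1$) converts (\ref{lambdaalphabound}) into the form (\ref{mainthm:lambdaassumption}) with $t_1 = t^*$ and $R\lambda_1 \asymp (T - t^*)^{(1+\alpha)/2}$ (after further shrinking $R$ to absorb the coefficient on the $(t_1 - t)$ term); in particular $\lambda_1 \to 0$ as $t^* \to T$.

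With the hypotheses in place, Theorem~\ref{thm:mainthm} gives
$$r \, |du(x, t^*)| \leq C_\alpha \sqrt{\eps} \left( \frac{R \lambda_1}{r} + \left( \frac{r}{R} \right)^\alpha \right)^{1/3}$$
for $2R\lambda_1 \leq r := \dist(x, x_0) \leq R/2$. By Struwe's theorem, $u(t) \to u(T)$ smoothly on compact subsets of $B_R(x_0) \setminus \{x_0\}$ as $t \to T$, so sending $t^* \to T$ (hence $\lambda_1 \to 0$) yields the pointwise estimate
$$|du(T)(x)| \leq C \sqrt{\eps} \, R^{-\alpha/3} \, |x - x_0|^{\alpha/3 - 1}$$
on $B_{R/2}(x_0) \setminus \{x_0\}$. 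Since $\alpha/3 - 1 > -1$, this singularity is integrable along any ray through $x_0$.

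From this pointwise bound the H\"older estimate follows by routine integration. Along the segment $[x_0, x]$, one has $\int_0^1 |du(T)(x_0 + s(x - x_0))|\, |x - x_0|\, ds \leq (3C/\alpha) |x - x_0|^{\alpha/3}$; the angular component of $du(T)$ obeys the same pointwise bound, forcing the oscillation of $u(T)$ on the circle of radius $r$ to be $O(r^{\alpha/3})$. Together these show that $u(T)$ extends continuously to $x_0$, with $d_N(u(T)(x), u(T)(x_0)) \leq C|x - x_0|^{\alpha/3}$. For general $x, y \in B_{R/2}(x_0)$, if the segment $[x, y]$ stays at distance $\geq |x - y|$ from $x_0$, direct integration of the gradient bound gives $d_N(u(T)(x), u(T)(y)) \leq C|x - y|^{\alpha/3}$; otherwise, the triangle inequality through $x_0$ suffices. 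The only nontrivial obstacle in the plan is the uniform verification of the tension hypothesis as $t^* \to T$, handled by the absolute continuity argument in the first paragraph; everything else is routine.
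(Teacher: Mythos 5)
Your proposal is correct and follows essentially the same route as the paper: verify the hypotheses of Theorem \ref{thm:mainthm}, apply it at times approaching $T$, pass to the limit away from $x_0$ (using smooth convergence there) to obtain $r|du(x,T)| \leq C\sqrt{\eps}\,(r/R)^{\alpha/3}$, and integrate to conclude H\"older continuity. The only cosmetic difference is that the paper fixes $\lambda_1$ and lets $t_1 \nearrow T$ before sending $\lambda_1 \searrow 0$, whereas you couple $\lambda_1 \asymp (T-t^*)^{(1+\alpha)/2}/R$; your additional verifications (tension smallness via absolute continuity with $R$ shrunk along with $t^*\to T$, the subadditivity step matching (\ref{lambdaalphabound}) to (\ref{mainthm:lambdaassumption}), and the explicit H\"older integration) are routine details the paper leaves implicit.
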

\begin{proof} First note that since the energy is bounded, the stress-energy tensor is also bounded in $L^1.$ By Corollary 4.5 of \cite{songwaldron}, with $q = 1,$ (\ref{lambdato0}) is true for $R > 0$ sufficiently small.

Now, given any $\lambda_1 > 0,$ in view of (\ref{lambdaalphabound}), (\ref{mainthm:lambdaassumption}) will be satisfied for all $t_1$ sufficiently close to $T.$  
For $x \in U_{2\lambda_1}^R(x_0),$ by Theorem \ref{thm:mainthm}, we have
\begin{equation*}
r|du(x,T)| = \lim_{t_1 \nearrow T} r|du(x,t_1)| \leq C \sqrt{\eps} \left( \frac{R\lambda_1}{r} + \left( \frac{r}{R} \right)^\alpha \right)^{\frac13}.
\end{equation*}
Letting $\lambda_1 \searrow 0,$ we obtain
\begin{equation*}
r|du(x,T)| \leq C\sqrt{\eps} \left( \frac{r}{R} \right)^{ \frac{\alpha}{3}}
\end{equation*}
for all $x \in B_{R/2} \setminus \{x_0\}.$
Integrating radially, this gives $u(T) \in C^{\frac{\alpha}{3}}.$ 
\end{proof}

\appendix

\section{Proof of Lemma \ref{lemma:evolutions}}\label{app:evolutions}

Let
\begin{equation*}
f_0(r) = \sqrt{ \int |u_\th (r,\theta,t)|^2 d\th}, \qquad f_1(r) = \sqrt{ \int |\nabla_\theta u_\th (r,\theta,t)|^2 d\th }.
\end{equation*}
In our previous paper \cite{songwaldron}, Lemma 5.1, we calculated the desired evolution of $f_0;$ we now apply a similar analysis to $f_1.$ These calculations go back to Lin-Wang \cite{linwangenergyidentity} and Parker \cite{parkerhmbubbletree}.

For convenience, we shall work below in cylindrical coordinates $(s=\ln r,\th).$ Letting
$$g_0=ds^2+d\th^2$$
denote the flat cylindrical metric, we have $g=\xi^{2} e^{2s} g_0.$
The differential of $u$ is given by
\begin{equation*}
du = u_s ds + u_\theta d\theta.
\end{equation*}
The tension field with respect to $g_0$ is given by
\[ \mathcal{T}_0(u)=\n_s u_s+\n_\th u_\th,\]
where $\n$ denotes the pullback connection on $u^*TN,$ as above.
The heat-flow equation (\ref{hm}) with respect to the metric $g$ becomes
\begin{equation}\label{e:hm-cylin}
u_t = \mathcal{T}(u) = \xi^{-2}e^{-2s}\mathcal{T}_0(u).
\end{equation}
We start from the identity
\begin{equation}\label{ds2f12}
\frac12 \p_s^2f_1^2= \int_{S^1}\( |\n_s \nabla_\theta u_{\th}|^2+\< \n_s^2 \nabla_\theta u_{\th}, \nabla_\theta u_\th\>\).
\end{equation}
We have
\begin{equation}\label{T0ut}
\nabla_s u_s + \nabla_\theta u_\theta = \T_0(u) =  \xi^2 e^{2s} u_t.
\end{equation}
Applying $\n_\theta,$ we obtain
\begin{equation*}
\begin{split}
\nabla_\theta \nabla_s u_s + \nabla^2_\theta u_\theta & = 2 \xi \nabla_\theta \xi e^{2s} u_t + \xi^2 e^{2s} \nabla_\theta u_t \\
& =2 \xi \nabla_\theta \xi e^{2s} u_t + \xi^2 e^{2s} \nabla_t u_\theta,
\end{split}
\end{equation*}
and
\begin{equation}\label{dtheta2sus}
\begin{split}
\nabla_\theta^2 \nabla_s u_s & = - \nabla^3_\theta u_\theta + \left( 2 (\nabla_\theta \xi)^2 + 2 \xi \nabla^2_\theta \xi \right) e^{2s} u_t \\
& \quad + 2 \xi \nabla_\theta \xi e^{2s} \nabla_t u_\theta + \xi^2 e^{2s} \nabla_\theta \nabla_t u_\theta \\
& = - \nabla^3_\theta u_\theta + e^{2s} \nabla_t \nabla_\theta u_\theta + I,
\end{split}
\end{equation}
where
\begin{equation}\label{I}
I = (\xi^2 - 1) e^{2s} \nabla_\theta^2 u_t + e^{2s} R(u_\theta, u_t) u_\theta + \left( 2 (\nabla_\theta \xi)^2 + 2 \xi \nabla^2_\theta \xi \right) e^{2s} u_t + 2 \xi \nabla_\theta \xi e^{2s} \nabla_\theta u_t.
\end{equation}
We may also commute derivatives to obtain
\begin{equation}\label{ds2thetautheta}
\begin{split}
\nabla_s^2 \nabla_\theta u_\theta & = \nabla_s \left( \nabla_\theta \nabla_s u_\theta + R(u_s, u_\theta) u_\theta \right) \\
& = \nabla_\theta \nabla_s^2 u_\theta + \nabla_s \left( R(u_s, u_\theta) u_\theta \right) \\
& = \nabla_\theta \nabla_s ( \nabla_\theta u_s ) + \nabla_s \left( R(u_s, u_\theta) u_\theta \right) \\
& = \nabla_\theta^2 \nabla_s u_s + I \! I,
\end{split}
\end{equation}
where
\begin{equation}\label{II}
\begin{split}
I\! I & = \nabla_\theta \left( R(u_s, u_\theta) u_s \right) + \nabla_s \left( R(u_s, u_\theta) u_\theta \right) \\
& = \nabla R \left( u_\theta, u_s, u_\theta \right) u_s + R(\nabla_\theta u_s, u_\theta) u_s +R( u_s, \nabla_\theta u_\theta) u_s + R(u_s, u_\theta) \nabla_\theta u_s \\
& \quad + \nabla R \left( u_s, u_s, u_\theta \right) u_\theta + R(\nabla_s u_s, u_\theta) u_\theta + R( u_s, \nabla_s u_\theta) u_\theta + R( u_s, u_\theta) \nabla_s u_\theta.
\end{split}
\end{equation}
Inserting (\ref{dtheta2sus}) and (\ref{ds2thetautheta}) into (\ref{ds2f12}), integrating by parts, and rearranging, we obtain
\begin{equation}\label{firstevol}
\frac12 \left( e^{2s} \partial_t f_1^2 - \p_s^2f_1^2 \right) = - \int_{S^1}\( |\n_s \nabla_\theta u_{\th}|^2 + |\n_\theta^2 u_\theta|^2 \right) - \int_{S^1} \LA I + I\! I, \nabla_\theta u_\theta \RA.
\end{equation}
We need the following simple estimates. Since $|u_\th|\le \eta$ is small, we may assume that the image of the curve $u(s, \cdot, t):S^1\to N$ lies in a coordinate chart of $N$ where the Christoffel symbol $\Ga$ is bounded by $C_N$. Then, in local coordinates, we have
\[ |\p_\th u_\th| = |\n_\th u_\th - \Ga(u_\th, u_\th)| \le |\n_\th u_\th| + |\Ga(u_\th, \n_\th u_\th)| \le \eta + C_N \eta^2 \leq 2 \eta,\]
assuming that $\eta$ is sufficiently small (depending on $N$).
Thus
\[\begin{aligned}
  |\n_{\th} u_{\th}|^2&=|\p_\th u_\th +\Ga(u_\th, u_\th)|^2\\
  &\ge   |\p_\th u_\th|^2-2|\p_\th u_\th||\Ga(u_\th, u_\th)|+|\Ga(u_\th, u_\th)|^2\\
  &\ge  |\p_\th u_\th|^2-C \eta |u_\th|^2.
\end{aligned}\]
Then the ordinary Poincar\'e inequality on $S^1$ yields
\begin{equation}\label{f1overf_0}
f_1^2 =  \int_{S^1}|\n_{\th} u_{\th}|^2\ge  \int_{S^1}|u_\th|^2-C \eta \int_{S^1}|u_\th|^2=(1-C\eta )f_0^2.
\end{equation}
A similar argument, applied to $\nabla_\theta^2 u_\theta,$ gives
\begin{equation}\label{nablaf1overf1}
\int_{S^1}|\n_{\th}^2 u_{\th}|^2 \ge  (1-C\eta) f_1^2.
\end{equation}
We first apply (\ref{nablaf1overf1}) and H\"older's inequality to (\ref{firstevol}), to obtain
\begin{equation}\label{secondevol}
\begin{split}
\frac12 \left( e^{2s} \partial_t f_1^2 - \p_s^2f_1^2 + (1 - C\eta) f_1^2 \right) & \leq - \int_{S^1} |\n_s \nabla_\theta u_{\th}|^2 - \int_{S^1} \LA I + II, \nabla_\theta u_\theta \RA \\
& \leq - \int_{S^1} |\n_s \nabla_\theta u_{\th}|^2 + \left( \|I\|_{L^2(S^1)} + \|II\|_{L^2(S^1)} \right) f_1.
\end{split}
\end{equation}
Note that
\[ \frac12\p_s f_1^2= f_1\p_s f_1=\int_{S^1}\<\n_s \n_\th u_{\th}, \n_\th u_\th\>\le \(\int_{S^1}|\n_s \n_\th u_{\th}|^2\)^{1/2}f_1,\]
so we have
$$|\p_s f_1|^2\le\int |\n_s \n_\th u_{\th}|^2.$$
On the other hand,
\[ \frac12\p_s^2(f_1^2)=f_1\cdot \p_s^2 f_1+ |\p_s f_1|^2.\]
Hence, ``dividing out'' by $f_1$ in (\ref{secondevol}) (which is justified in the distribution sense), we obtain
\begin{equation}\label{thirdevol}
\begin{split}
e^{2s} \partial_t f_1 - \p_s^2f_1 + (1 - C\eta) f_1 & \leq 2 \left( \|I\|_{L^2(S^1)} + \|II\|_{L^2(S^1)} \right).
\end{split}
\end{equation}
It remains to estimate the terms on the RHS of (\ref{thirdevol}). By (\ref{evolutions:derivbounds}), we have
\begin{equation*}
e^{2s} \left(|u_t| + |\nabla_\theta u_t| + |\nabla^2_\theta u_t| \right) \leq C \eta.
\end{equation*}
Combining this with (\ref{smallness1}), from (\ref{I}), we obtain
\begin{equation*}
\left\| I \right\|_{L^2(S^1) } \leq C \xi_0 \eta e^{2s} + \eta^2 f_0 \leq C \xi_0 \eta e^{2s} + \eta f_1,
\end{equation*}
where we have used (\ref{f1overf_0}).
From (\ref{II}), since each term has at least one factor of $u_\theta$ or $\n_\theta u_\th,$ we also obtain
\begin{equation*}
\left\| I\! I \right\|_{L^2(S^1) } \leq C_N \eta^2 f_1 \leq \eta f_1,
\end{equation*}
for $\eta$ sufficiently small.
Inserting these estimates into (\ref{thirdevol}), and absorbing the $C \eta f_1$ terms in the LHS, we obtain
\begin{equation}\label{fourthevol}
\begin{split}
e^{2s} \partial_t f_1 - \p_s^2f_1 + (1 - C\eta) f_1 & \leq C \xi_0 \eta e^{2s}.
\end{split}
\end{equation}
Translating the above equation back to polar coordinates, we get (\ref{evolutions:angular}).

To estimate the radial energy
$$g = \sqrt{ \int_{ \{e^s\} \times S^1 } |u_s|^2 d\th },$$
we start from the identity
\begin{equation}\label{urevol:1}
 \frac12 \p_s^2 g^2= \int_{S^1}\( |\n_s u_s|^2+\< \n_s^2u_{s}, u_s\>\).
\end{equation}
Applying $\n_s$ to (\ref{T0ut}), we obtain
\begin{equation*}
\begin{split}
\nabla^2_s u_s + \nabla_s \nabla_\theta u_\theta & = \left( 2 \xi \partial_s \xi + 2 \xi^2 \right) e^{2s}  u_t + \xi^2 e^{2s} \nabla_s u_t \\
& = 2 \left( \xi^{-1} \partial_s \xi + 1 \right) \T_0(u) + \xi^2 e^{2s} \nabla_t u_s \\
& = 2 \left( \nabla_s u_s + \nabla_\theta u_\theta \right) + e^{2s} \nabla_t u_s + 2 \xi^{-1} \partial_s \xi (\nabla_s u_s + \nabla_\theta u_\theta)+ \left( \xi^2 - 1 \right) e^{2s} \nabla_s u_t.
\end{split}
\end{equation*}
We also have
$$\nabla_s \nabla_\theta u_\theta = \nabla_\theta^2 u_s + R(u_s, u_\theta) u_\theta.$$
Returning to (\ref{urevol:1}), we have
\begin{equation*}
\begin{split}
\frac12 \partial_s^2 g^2 & = \int |\nabla_s u_s|^2 + \int \LA e^{2s} \nabla_t u_s + 2 \nabla_s u_s + 2 \nabla_\theta u_\theta , u_s \RA \\
& \qquad + \int |\nabla_\theta u_s |^2 + \int \LA 2 \xi^{-1} \partial_s \xi \left( \nabla_s u_s + \nabla_\theta u_\theta \right) + (\xi^2 - 1) e^{2s} \nabla_s u_t , u_s \RA - \int R(u_s, u_\theta) u_\theta,
\end{split}
\end{equation*}
where we have integrated by parts once.
Applying H\"older's inequality, rearranging, and using (\ref{smallness1}), we obtain
\begin{equation*}
\begin{split}
\frac12 \left( e^{2s} \partial_t - \partial_s^2 + 2 \partial_s \right) g^2 & \leq - \int |\nabla_s u_s|^2  + 2 f_1 g + C \xi_0  \eta e^{2s} g \\
& \qquad - \int |\nabla_\theta u_s |^2 + C_N \eta f_0 g.
\end{split}
\end{equation*}
We choose $\eta$ small enough that $C_N \eta \leq 1,$ and discard the $-\int |\nabla_\theta u_s|^2$ term. After ``dividing out'' by $g$ as above, we obtain
\begin{equation}\label{urevol:2}
\begin{split}
\left( e^{2s} \partial_t - \partial_s^2 + 2 \partial_s \right) g \leq 6f_1 + C \xi_0 \eta e^{2s}.
\end{split}
\end{equation}
Changing back to polar coordinates and dividing by $r,$ we get the desired evolution equation.

\section{Radial heat kernel}

In this appendix, we extract several results from the appendix of \cite{lte}, replacing the integer dimension by a real number $\mu > 1.$ The proofs of Propositions \ref{festimate}-\ref{gestimate} are based on these results.

Let
$$\Delta_\mu = \p_r^2 + \frac{\mu - 1}{r} \p_r.$$
In the case that $\mu$ is an integer, the spherical average of the Euclidean heat kernel is given by
\begin{equation}\label{Halphadef}
H(r,s,t) = \frac{c_\mu e^{\frac{-(r^2 + s^2)}{4t}}}{t^{\mu/2} } I\left( \frac{rs}{2t} \right),
\end{equation}
where $c_\mu$ is an appropriate constant, and\footnote{Since $I$ satisfies the ODE (\ref{Iode}), we in fact have
$$I(x) = x^{1 - \frac{\mu}{2}} I_{\frac{\mu}{2} - 1}(x),$$
where $I_{\frac{\mu}{2} -1}$ is the modified Bessel function. This recovers formula (3.3) of Bragg \cite{braggradialheatpolys}.}
\begin{equation}\label{varphidef}
I(x) = \int_0^\pi e^{x \cos \theta } \sin^{\mu - 2} \theta \, d\theta.
\end{equation}

\begin{lemma}\label{lemma:euclheatker} For any real $\mu > 1,$ the above function $H$ satisfies
\begin{equation*}
\begin{split}
\left( \p_t - \Delta_\mu \right) H (\cdot, s,t) = 0, \qquad t > 0, \\
H (r,s,t) > 0 \mbox{ for } 0 < r,s,t < \infty, \\
H (r, s,t) \to \frac{1}{s^{\mu - 1}} \delta(r-s) \qquad t \searrow 0,
\end{split}
\end{equation*}
and
\begin{equation}\label{Hbound}
\frac{C_\mu^{-1} e^{\frac{-(r-s)^2}{4t}}}{t^{1/2} (rs + t)^{\frac{n-1}{2}} } \leq H(r,s,t) \leq \frac{C_\mu e^{\frac{-(r-s)^2}{4t}}}{t^{1/2} (rs + t)^{\frac{n-1}{2}} }.
\end{equation}
\end{lemma}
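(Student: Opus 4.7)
My plan is to verify the four claims of the lemma for general real $\mu > 1$ by reducing everything to properties of the single-variable function $I(x)$ defined in (\ref{varphidef}). The key observation is that $I$ satisfies the ODE
\[
x I''(x) + (\mu-1) I'(x) - x I(x) = 0, \qquad x > 0,
\]
from which all other assertions will follow by direct computation or standard asymptotic analysis.

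To obtain this ODE, first differentiate under the integral in (\ref{varphidef}) to get
\[
I(x) - I''(x) = \int_0^\pi (1 - \cos^2\theta)\, e^{x\cos\theta}\, \sin^{\mu-2}\theta \, d\theta = \int_0^\pi \sin^{\mu}\theta \, e^{x\cos\theta}\, d\theta.
\]
Integration by parts, writing $\sin^\mu\theta \, e^{x\cos\theta} = \sin^{\mu-1}\theta \cdot \frac{d}{d\theta}\!\left(-\frac{1}{x}e^{x\cos\theta}\right)$, yields $\frac{\mu-1}{x} I'(x)$ (the boundary terms vanish since $\mu - 1 > 0$). Then, substituting $H = c_\mu t^{-\mu/2} e^{-(r^2+s^2)/(4t)} I(rs/(2t))$ into $\partial_t - \Delta_\mu$ and grouping terms involving $I$, $I'$, $I''$ reduces the result, after elementary algebra, to a multiple of $I - \frac{\mu-1}{B}I' - I''$ with $B = rs/(2t)$, which vanishes by the ODE. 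This establishes the first assertion.

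Positivity is immediate from the integral representation: the integrand defining $I$ is positive for $x > 0$, and the prefactor $c_\mu t^{-\mu/2} e^{-(r^2+s^2)/(4t)}$ is positive. The two-sided bound (\ref{Hbound}) will follow from the sharp two-sided asymptotics
\[
C_\mu^{-1} (1+x)^{-(\mu-1)/2} e^x \leq I(x) \leq C_\mu (1+x)^{-(\mu-1)/2} e^x, \qquad x \geq 0,
\]
which I would derive by splitting the integral in (\ref{varphidef}) near $\theta = 0$ (Laplace's method for large $x$) and bounding directly for bounded $x$. Combining this with the algebraic identity $-(r^2+s^2)/(4t) + rs/(2t) = -(r-s)^2/(4t)$ and the equivalence $1 + rs/(2t) \asymp (rs + t)/t$ produces (\ref{Hbound}).

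The main obstacle is the initial condition $H(r,s,t) \to s^{-(\mu-1)}\delta(r-s)$. I would establish this from the bound just obtained plus a total-mass computation. The bound shows that $H(r,s,t)$ concentrates near $r = s$ as $t \searrow 0$ (from the Gaussian factor $e^{-(r-s)^2/(4t)}$) with integrable tails. To identify the normalization, I would compute $\int_0^\infty H(r,s,t)\, r^{\mu-1}\, dr$ for fixed $s > 0$. Using the eigenfunction/subordination principle — namely that $\Delta_\mu$ applied to the constant function is zero and $H$ is the fundamental solution — the integral is preserved in $t$, and its value at $t \searrow 0$ is seen to equal $1$ by rescaling in Laplace's approximation near the diagonal (the integral reduces to a Gaussian on $\mathbb{R}$ contributing $1$ after the $r^{\mu-1}$ is evaluated at $r = s$). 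This pins down the constant $c_\mu$ and yields the distributional limit, completing the proof.
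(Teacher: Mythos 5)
Your proposal is correct and takes essentially the same route as the paper: the identical integration-by-parts verification that $I$ satisfies $I''+\frac{\mu-1}{x}I'-I=0$ (your combination $I-\frac{\mu-1}{x}I'-I''$ is just its negative), followed by factoring the Gaussian $e^{-(r-s)^2/4t}$ out of $H$ and estimating the remaining integral $\int_0^\pi e^{x(\cos\theta-1)}\sin^{\mu-2}\theta\,d\theta$ by a Laplace-type/substitution argument to obtain (\ref{Hbound}). The only addition is your treatment of positivity and of the limit $H\to s^{-(\mu-1)}\delta(r-s)$ via concentration plus a mass computation fixing $c_\mu$, which the paper's written proof leaves implicit; that sketch is sound provided you use the precise leading-order asymptotic of $I(x)$ as $x\to\infty$ (not merely the two-sided bound) to identify the normalization.
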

\begin{proof} 
We calculate
\begin{equation*}
\left( \p_t - \Delta_\mu \right) H(r,s,t) = \frac{c_\mu e^{\frac{-(r^2 + s^2)}{4t}}}{t^{\mu/2} } \left( \frac{-s^2}{4t^2} \right) \left( I''(x) + \frac{(\mu - 1)}{x} I'(x) - I(x) \right).
\end{equation*}
For $I(x)$ given by (\ref{varphidef}), we have
\begin{equation}\label{Iode}
\begin{split}
I''(x) + \frac{(\mu-1)}{x} I'(x) - I(x) & = \int_0^\pi e^{x \cos \theta} \left( \cos^2 \theta + \frac{(\mu-1)}{x}\cos\theta - 1 \right) \sin^{\mu - 2} \theta \, d\theta \\
& = \int_0^\pi e^{x \cos \theta} \left( -\sin^2 \theta + \frac{(\mu - 1)}{x}\cos\theta\right) \sin^{\mu - 2} \theta \, d\theta \\
& = 0,
\end{split}
\end{equation}
after integrating by parts.
Hence, $H$ solves the PDE as required for any real $\mu.$

Borrowing a factor of $e^{\frac{rs}{2t}}$ in (\ref{varphidef}), we have
\begin{equation}\label{HI1}
H(r,s,t) = \frac{c_\mu e^{\frac{-(r - s)^2}{4t}}}{t^{\mu/2} } I_1\left( \frac{rs}{2t} \right),
\end{equation}
where
\begin{equation*}
I_1(x) = \int_0^\pi e^{x (\cos \theta - 1) } \sin^{\mu - 2} \theta d\theta.
\end{equation*}
Then $I_1(x)$ clearly tends to a positive constant as $x \to 0.$ By the substitution $u = \sqrt{x(1 - \cos \theta )},$ it follows that the integral is bounded by a constant times $x^{-\frac{\mu - 1}{2}}.$ Hence
$$I_1(x) \leq \frac{C}{(1 + x)^{\frac{\mu - 1}{2}}} \leq \left( \frac{t}{rs + t} \right)^{\frac{\mu - 1}{2}}.$$
Substituting into (\ref{HI1}), we obtain the desired bound.
\end{proof}

\subsection{Initial data}\label{app:initialkernel} Let $H_{\LB \rho, R \RB}(r,s,t)$ be the Dirichlet kernel for the operator $\p_t - \Delta_\mu$ on the interval $\LB \rho, R\RB,$ satisfying
\begin{align*}
\left( \p_t - \Delta_\mu \right) H_{\LB \rho, R\RB}(\cdot, s,t) & = 0 & t > 0, \\
H_{\LB \rho, R\RB}(\rho, s,t) & = 0 = H_{\LB \rho, R\RB} (R, s,t) &  \rho \leq s \leq R , \,\, t > 0, \\
H_{\LB \rho, R\RB}(r, s,t) & \to \frac{1}{s^{\mu - 1}} \delta(r - s) &  t \searrow 0.
\end{align*}
By the maximum principle, we have $0 \leq H_{\LB \rho, R \RB}(r,s,t) \leq H(r,s,t),$ and so
\begin{equation}\label{HrhoRbound}
0 \leq H_{\LB \rho , R \RB} (r,s,t) \leq \frac{C_\mu e^{\frac{-(r-s)^2}{4t}}}{t^{1/2} (rs + t)^{\frac{\mu-1}{2}} }.
\end{equation}
Given an initial function $\varphi(r)$ on $\LB \rho, R \RB,$ the solution of the initial-value problem is given by
\begin{equation}\label{ivp}
v_0(r,t) = \int_\rho^R H_{\LB \rho , R \RB} (r,s,t) \varphi(s) s^{\mu - 1} \, ds.
\end{equation}
Let
$$w^a(r,t) = \left( \frac{r^2}{r^2 + t} \right)^{a/2}.$$

\begin{prop}\label{initialestimate} For $0 \leq k \leq \mu - 1,$ assuming that $|\varphi(r) | \leq A r^{-k},$ we have
\begin{equation*}
|v_0(r,t)| \leq C_\mu A r^{-k} w^{k}(r,t) w^{\mu - k}(R,t).
\end{equation*}
\end{prop}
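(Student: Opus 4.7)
The plan is to exploit the parabolic scaling symmetry of the heat equation to reduce to a single normalized scale, then bound the integral representation of $v_0$ directly using the pointwise kernel estimate (\ref{HrhoRbound}). Under $r \mapsto \lambda r,$ $t \mapsto \lambda^2 t,$ the operator $\p_t - \Delta_\mu$ is invariant and the bound $|\varphi(s)| \leq A s^{-k}$ transforms as $\lambda^{-k};$ choosing $\lambda = R$ reduces the desired estimate to the case $R = 1.$ Using the identity $r^{-k} w^k(r,t) = (r^2+t)^{-k/2},$ the claim becomes
$$|v_0(r,t)| \leq C_\mu A\, (r^2 + t)^{-k/2} (1+t)^{-(\mu-k)/2}$$
for all $r$ in the rescaled interval and $t > 0.$

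Combining (\ref{HrhoRbound}) with $|\varphi(s)| \leq A s^{-k}$ and positivity of the Dirichlet kernel, one has
$$|v_0(r,t)| \leq C_\mu A \int_0^1 \frac{e^{-(r-s)^2/(4t)}}{t^{1/2} (rs+t)^{(\mu-1)/2}}\, s^{\mu-1-k} \, ds.$$
I would handle three parameter ranges separately. When $t \geq 1,$ one has $rs+t \asymp t$ and the Gaussian factor is $O(1),$ so the kernel is uniformly of size $t^{-\mu/2};$ the remaining $s$-integral converges because $\mu - k > 0,$ producing the bound $C_\mu A\, t^{-\mu/2},$ which matches the right-hand side. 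When $t < 1$ and $r \geq \sqrt{t},$ split the $s$-integral at $s = r/2$: on $s \geq r/2$ the denominator is bounded below by $c_\mu r^{\mu-1}$ and Gaussian integration over the peak at $s = r$ of width $\sqrt{t}$ gives the desired $r^{-k};$ on $s < r/2$ the bound $|r-s| \geq r/2$ produces the factor $e^{-r^2/(16t)},$ which absorbs the polynomial excess $(r/\sqrt{t})^\mu$ since $r/\sqrt{t} \geq 1.$ When $t < 1$ and $r < \sqrt{t},$ split at $s = 2\sqrt{t}$: the inner part yields $t^{-k/2}$ directly, and on the outer part the substitution $s = \sqrt{t}\,u$ reduces matters to a convergent Gaussian moment.

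The main obstacle is the bookkeeping needed to recover the factor $w^{\mu-k}(R,t)$ after undoing the rescaling: this factor encodes the extra decay coming from the Dirichlet condition at $r = R$ once $t$ exceeds $R^2,$ which is precisely what forces the case split at $t = 1$ in the normalized problem. The hypothesis $0 \leq k \leq \mu - 1$ is used only to ensure that $s^{\mu-1-k}$ is integrable near $s = 0$ and that the one-dimensional Gaussian moments appearing above are finite.
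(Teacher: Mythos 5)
Your proposal is correct, and its skeleton is the same as the paper's: dominate the Dirichlet kernel by the free radial kernel via (\ref{HrhoRbound}) and then estimate the resulting integral against $\varphi$. The difference is in how that integral is handled. The paper pulls out $r^{-k}$ and immediately quotes Lemma A.1a of \cite{lte} (with $a=k$, $b=\mu-k-1$, $c=d=0$), which yields the factor $\frac{R-\rho}{R-\rho+\sqrt{t}}\,w^{k}(r,t)\,w^{\mu-k-1}(R,t)\le w^{k}(r,t)\,w^{\mu-k}(R,t)$ in one stroke; you instead normalize $R=1$ by parabolic scaling and re-derive the needed special case of that lemma by hand, splitting into the regimes $t\ge 1$, $t<1$ with $r\ge\sqrt t$, and $t<1$ with $r<\sqrt t$. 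I checked your three cases and the exponents all close: in particular the sub-case $s<r/2$ does produce exactly the excess $(r/\sqrt t)^{\mu}$ that the factor $e^{-r^{2}/16t}$ absorbs, and the inner/outer split at $s=2\sqrt t$ gives $t^{-k/2}$ as claimed (for the peak region $s\ge r/2$ you should note explicitly that the tail $s\ge 2r$ is controlled by the Gaussian together with $t\le r^{2}$, which your one-line description elides but which works). What the paper's route buys is brevity and a slightly sharper intermediate bound (the $\frac{R-\rho}{R-\rho+\sqrt t}$ factor, and the full parameter range of the cited lemma, which is reused elsewhere in \cite{lte}); what yours buys is a self-contained, elementary argument not relying on the external lemma. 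One small conceptual correction: the factor $w^{\mu-k}(R,t)$ does not come from the Dirichlet condition at $r=R$; it comes from the free-kernel bound together with the fact that the data are supported in $\LB\rho,R\RB$ (the Dirichlet kernel is only used through the comparison $0\le H_{\LB\rho,R\RB}\le H$), and indeed your own estimates never use the boundary condition beyond (\ref{HrhoRbound}). This remark does not affect the validity of your proof.
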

\begin{proof} From (\ref{HrhoRbound}) and (\ref{ivp}), we have
\begin{equation*}
\begin{split}
|v_0(r,t)| & \leq C A \int_\rho^R e^{-(r-s)^2/4t} \frac{s^{-k + \mu -1}}{(rs + t)^{\frac{\mu -1}{2}}} \, \frac{ds}{t^{1/2}} \\
& \leq CA r^{-k} \int_\rho^R e^{-(r-s)^2/4t} \frac{r^{k} s^{-k + \mu - 1}}{(rs + t)^{\frac{\mu -1}{2}}} \, \frac{ds}{t^{1/2}}.
\end{split}
\end{equation*}
By Lemma A.1a of \cite{lte}, applied with $a = k, b= \mu - k -1,$ and $c = d = 0,$ we have
\begin{equation*}
\begin{split}
\int_\rho^R e^{-(r-s)^2/4t} \frac{r^{k} s^{\mu - k - 1}}{(rs + t)^{\frac{\mu -1}{2}}} \, \frac{ds}{t^{1/2}} & \leq C \frac{ R - \rho}{R - \rho + \sqrt{t} } w^{k}(r,t) w^{\mu - k - 1}(R,t) \\
& \leq C w^{k}(r,t) w^{\mu - k}(R,t).
\end{split}
\end{equation*}
The result follows.
\end{proof}


\subsection{Boundary data}\label{app:innerkernel} To construct a kernel for the boundary data at the inner radius $\rho = 1,$ we follow the argument of \cite{lte}, Appendix A.3. Suppose $R > 1,$ and let
$$h(r)  = \frac{r^{2 - \mu} - R^{2 - \mu} }{1 - R^{2 - \mu} }.$$
Let
$$y_1(r,t) = h(r) - \int_1^R H_{\LB 1, R \RB}(r,s,t) h(s) \, s^{\mu - 1} ds.$$
This satisfies
\begin{equation*}
\begin{split}
(\p_t - \Delta_\mu) y_1 & = 0 \\
y_1(r,0) & = 0, \quad 1 < r < R \\
y_1(1,t) & = 1, \,  y_1(R,t) = 0, \quad t > 0.
\end{split}
\end{equation*}
The function
$$G_{\LB 1, R\RB}(r,t) = \p_t y_1(r,t)$$
satisfies
$$\lim_{r \searrow 1} G_{\LB 1, R\RB}(r,t) = \delta(t).$$
\begin{lemma}\label{boundarykernel}
We have
\begin{equation*}
0 \leq G_{\LB 1, R\RB}(r,t) \leq \frac{C_{\mu} e^{\frac{-(r -1)^2}{5t} }}{t(t + 1)^{\frac{\mu}{2} - 1}} \cdot \begin{cases} \min \LB (r-1)/\sqrt{t}, 1 \RB & (t \leq 1) \\
\min \LB r - 1, 1 \RB & (t \geq 1). \end{cases}
\end{equation*}
\end{lemma}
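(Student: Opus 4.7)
The plan is to first establish nonnegativity of $G$, then derive a representation of $G$ as a normal derivative of the Dirichlet heat kernel at $s=1$, and finally bound that derivative by comparison with a barrier built from the free-space kernel $H$.

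\emph{Step 1 (Nonnegativity).} For any $\eps > 0$, the difference $w_\eps(r,t) := y_1(r, t + \eps) - y_1(r,t)$ satisfies $(\p_t - \Delta_\mu) w_\eps = 0$ on $[1,R]$ with zero boundary values at $r=1$ and $r=R$, and initial data $w_\eps(r,0) = y_1(r,\eps) \geq 0$ (which is itself $\geq 0$ by the maximum principle applied to $y_1$). By the comparison principle $w_\eps \geq 0$, so $y_1$ is nondecreasing in $t$ and $G = \p_t y_1 \geq 0$.

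\emph{Step 2 (Normal derivative formula).} Since $\p_t H_{\LB 1,R\RB} = \Delta_{\mu,s} H_{\LB 1,R\RB}$ (by symmetry of the Dirichlet kernel in $r,s$) and $\Delta_\mu h \equiv 0$ on $(1,R)$, differentiating the defining formula for $y_1$ in $t$ and integrating by parts twice in $s$ against the weight $s^{\mu-1}$ gives
\begin{equation*}
G_{\LB 1,R\RB}(r,t) = -\int_1^R \Delta_{\mu,s} H_{\LB 1,R\RB}(r,s,t) \, h(s) \, s^{\mu-1} \, ds = \p_s H_{\LB 1,R\RB}(r,s,t)\big|_{s=1}.
\end{equation*}
All boundary terms at $s=R$ vanish because $H_{\LB 1,R\RB}(r,R,t) = 0$ and $h(R) = 0$, and at $s=1$ only the term with $h(1)=1$ survives, since $H_{\LB 1,R\RB}(r,1,t) = 0$. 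This identity both reconfirms $G\geq 0$ and reduces the problem to a Hopf-type bound for the normal derivative of $H_{\LB 1,R\RB}$ at the inner boundary.

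\emph{Step 3 (Barrier bound).} To bound $\p_s H_{\LB 1,R\RB}(r,s,t)|_{s=1}$, I would construct a supersolution $\bar H(r,s,t)$ for the heat equation in $(s,t)$ that vanishes at $s=1$ and dominates $H_{\LB 1,R\RB}$ near the inner boundary. A natural candidate is the reflection-type expression
\begin{equation*}
\bar H(r,s,t) = H(r,s,t) - H(r, 2-s, t) + E(r,s,t),
\end{equation*}
where $E$ is an explicit correction accounting for the fact that the reflection $s \mapsto 2-s$ does not preserve $\Delta_\mu = \p_s^2 + (\mu - 1)s^{-1}\p_s$ when $\mu \neq 1$. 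Using (\ref{Hbound}), one checks $(\p_t - \Delta_{\mu,s}) \bar H \geq 0$ and $\bar H \geq H_{\LB 1,R\RB}$ by the comparison principle. Direct differentiation of $\bar H$ in $s$ at $s=1$, followed by the Gaussian estimate in (\ref{Hbound}), produces a bound of order $(r-1)\,t^{-3/2}\,(r+t)^{-(\mu-1)/2}\, e^{-(r-1)^2/5t}$ for $t \leq 1$, which in the two regimes $(r-1)/\sqrt{t} \leq 1$ and $\geq 1$ gives the claimed form. For $t \geq 1$, the free-space kernel $H$ has the additional algebraic factor $(t+1)^{1-\mu/2}$ built into (\ref{Hbound}), while the $\min\{r-1,1\}$ prefactor (replacing $\min\{(r-1)/\sqrt{t},1\}$) comes from the derivative of the barrier on the diffusion length $\sqrt{t} \gtrsim 1$.

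\emph{Main obstacle.} The hard step is Step 3: because $\Delta_\mu$ is not translation-invariant, the reflection $s\mapsto 2-s$ is only an approximate symmetry, and the correction $E$ must be chosen delicately to preserve both the supersolution property and the boundary vanishing at $s=1$ while producing an admissible upper bound for $\p_s \bar H|_{s=1}$. Matching the regimes $t \leq 1$ and $t \geq 1$ seamlessly also requires a global comparison at $t = 1$, which follows by applying (\ref{HrhoRbound}) and a Schauder interior estimate to propagate the bound obtained for small $t$.
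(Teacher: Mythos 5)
Your Steps 1--2 are sound: the monotonicity argument gives $G_{\LB 1,R\RB}\geq 0$, and the integration by parts against the weight $s^{\mu-1}$ (using $\Delta_\mu h=0$, $h(1)=1$, $h(R)=0$, and the symmetry of the Dirichlet kernel) correctly identifies $G_{\LB 1,R\RB}(r,t)=\p_s H_{\LB 1,R\RB}(r,s,t)\big|_{s=1}$, so the lemma is reduced to a quantitative Hopf-type bound on the inner normal derivative of the Dirichlet kernel. But that bound is the entire content of the lemma, and your Step 3 does not prove it: the correction $E$ is never constructed, and the three assertions that carry all the weight --- that $\bar H$ is a supersolution of $\p_t-\Delta_{\mu,s}$, that $\bar H\geq H_{\LB 1,R\RB}$ on the parabolic boundary, and that $\p_s\bar H|_{s=1}$ obeys the stated Gaussian bound with the factor $\min\LB (r-1)/\sqrt t,1\RB$ --- are asserted rather than checked. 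There are concrete obstructions you would have to handle: the reflected term $H(r,2-s,t)$ is not even defined for $s>2$ (so for $R>2$ the barrier must be localized near $s=1$, which introduces a new comparison on the artificial boundary); the drift $\frac{\mu-1}{s}\p_s$ breaks the reflection symmetry, and whether the resulting error has a favorable sign (or can be absorbed into an explicit $E$ that still vanishes at $s=1$) is exactly the delicate point you defer; and your treatment of the regime $t\geq 1$, where the prefactor improves from $(r-1)/\sqrt t$ to $\min\LB r-1,1\RB$, consists of an appeal to ``Schauder to propagate the bound,'' which is not an argument for a pointwise kernel estimate of this shape. As written, the proposal is a plausible plan with the hard estimate missing.

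For comparison, the paper does not reprove the estimate at all: it observes that $y_1$, $G_{\LB 1,R\RB}$ and the claimed bound are the $\mu$-parameter versions of the corresponding objects in Lemma A.4b of the author's earlier paper \cite{lte}, and that the proof there (written for integer dimension $n$) uses only the ODE/kernel identities that remain valid for real $\mu>1$ --- in particular the bound (\ref{Hbound}) established in Lemma \ref{lemma:euclheatker} --- so it carries over verbatim with $n$ replaced by $\mu$. If you want a self-contained proof along your lines, the reduction $G=\p_s H_{\LB 1,R\RB}|_{s=1}$ is a reasonable starting point, but you must actually produce the barrier (or an equivalent device, e.g.\ differentiating an explicit formula for $y_1$ and estimating via (\ref{HrhoRbound})) and verify the boundary factor in both time regimes.
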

\begin{proof} Replacing $n$ by $\mu,$ the bound is identical to that of Lemma A.4b of \cite{lte}, and the proof there carries over. 
\end{proof}

To obtain an inner boundary kernel for $\LB \rho, R \RB,$ we let
$$G_{\LB \rho, R \RB}(r,t) = \frac{1}{\rho^{2}} G_{\LB 1, R/\rho\RB}(r/\rho,t/\rho^2).$$
By Lemma \ref{boundarykernel}, this satisfies
\begin{equation}\label{Gbound}
G_{\LB \rho, R \RB}(r,t) \leq \frac{C_{\mu} e^{\frac{-(r -\rho)^2}{5t} } \rho^{\mu - 2} }{t(t + \rho^2 )^{\frac{\mu}{2} - 1}} \cdot \begin{cases} \min \LB (r-\rho)/\sqrt{t}, 1 \RB & (t \leq 1) \\
\min \LB r - \rho, 1 \RB & (t \geq 1). \end{cases}
\end{equation}
The solution of the boundary problem with data $\psi(t)$ at $r = \rho$ is given by
\begin{equation}\label{boundaryivp}
v_1(r,t) = \int_0^t \psi(\tau) G_{\LB \rho, R \RB}(r,t - \tau) \, d\tau.
\end{equation}

\begin{prop}\label{innerestimate} For $2\rho \leq r \leq R$ and $t \geq 0,$ we have
\begin{equation*}
| v_1(r,t) | \leq C_\mu e^{-(r - \rho)^2/6t} \left( \frac{\rho^{ \mu - 2 }}{r^{\mu - 1}} \right) \sqrt{ \int_0^t \psi^2(\tau) d\tau }.
\end{equation*}
\end{prop}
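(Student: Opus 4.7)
The plan is to apply the Cauchy--Schwarz inequality to the Duhamel formula (\ref{boundaryivp}) in the time variable and then bound the $L^2$ norm of the boundary kernel $G_{[\rho,R]}(r,\cdot)$. Writing $s = t - \tau,$ Cauchy--Schwarz gives
\[
|v_1(r,t)|^2 \leq \Bigl(\int_0^t \psi(\tau)^2\,d\tau\Bigr)\Bigl(\int_0^t G_{[\rho,R]}(r,s)^2\,ds\Bigr),
\]
so, upon taking square roots, it is enough to establish
\[
\int_0^t G_{[\rho,R]}(r,s)^2\,ds \;\leq\; C_\mu^2\,\frac{\rho^{2(\mu-2)}}{r^{2(\mu-1)}}\,e^{-(r-\rho)^2/3t}.
\]

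Bounding the ``$\min$'' factor in (\ref{Gbound}) by $1$ (which is valid in both the $s\leq 1$ and $s\geq 1$ regimes of that estimate) yields the uniform pointwise bound
\[
G_{[\rho,R]}(r,s) \;\leq\; \frac{C_\mu\,\rho^{\mu-2}}{s\,(s+\rho^2)^{\mu/2-1}}\,e^{-(r-\rho)^2/5s},\qquad s>0.
\]
To extract an exponential factor depending on $t$ rather than $s,$ I use the elementary observation that for any $a>0$ and $0 < s \leq t,$
\[
-\tfrac{2a}{5s}+\tfrac{a}{3t} \;\leq\; -\tfrac{2a}{5s}+\tfrac{a}{3s} \;=\; -\tfrac{a}{15s},
\]
so that $e^{-2a/(5s)} \leq e^{-a/(3t)}\,e^{-a/(15s)}.$ Applying this with $a = (r-\rho)^2$ reduces the claim to showing
\[
J \;:=\; \int_0^t \frac{e^{-(r-\rho)^2/(15s)}}{s^2(s+\rho^2)^{\mu-2}}\,ds \;\leq\; \frac{C_\mu}{r^{2(\mu-1)}}.
\]

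To estimate $J,$ I extend the range of integration to $(0,\infty)$ by positivity and change variables via $u = (r-\rho)^2/s,$ which yields
\[
J \;\leq\; \frac{1}{(r-\rho)^2}\int_0^\infty e^{-u/15}\Bigl(\tfrac{(r-\rho)^2}{u} + \rho^2\Bigr)^{2-\mu}\,du.
\]
The hypothesis $r \geq 2\rho$ implies $(r-\rho)^2 \geq \rho^2,$ so I split the $u$-integral at $u_\ast = (r-\rho)^2/\rho^2 \geq 1.$ For $u \leq u_\ast$ the first summand in the parenthesis dominates, and the contribution is bounded, up to a constant depending on $\mu,$ by $(r-\rho)^{-2(\mu-1)}\int_0^\infty e^{-u/15}u^{\mu-2}\,du,$ which is finite since $\mu > 1.$ For $u \geq u_\ast$ the $\rho^2$ term dominates, producing a contribution of order $\rho^{2(2-\mu)}e^{-u_\ast/15};$ because $u_\ast \geq 1$ and exponentials beat polynomials, the inequality $x^{|\mu-2|}e^{-x/15}\leq C_\mu$ for $x\geq 1$ absorbs this term into the same $(r-\rho)^{-2(\mu-1)}$ decay. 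Using $r - \rho \geq r/2$ one last time yields $J \leq C_\mu\,r^{-2(\mu-1)},$ which completes the proof after reassembly. The only conceptually nontrivial step is the exponential-extraction trick above; the remaining work is routine bookkeeping in splitting the $u$-integral at $u_\ast$.
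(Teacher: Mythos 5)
Your proof is correct and follows essentially the same route as the paper: Cauchy--Schwarz in time applied to the Duhamel formula (\ref{boundaryivp}) together with the kernel bound (\ref{Gbound}), followed by a change of variables to estimate the $L^2$-in-time norm of $G_{[\rho,R]}$. Your explicit splitting $e^{-2(r-\rho)^2/5s}\leq e^{-(r-\rho)^2/3t}e^{-(r-\rho)^2/15s}$ and the analysis at $u_\ast=(r-\rho)^2/\rho^2$ simply carry out in detail the step the paper compresses into ``the result follows by changing variables.''
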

\begin{proof} We apply H\"older's inequality as in the proof of Proposition A.5b of \cite{lte}. From (\ref{Gbound}-\ref{boundaryivp}), we have:
\begin{equation*}
\begin{split}
r^{\mu - 1} |v_1(r,t)| & \leq C_\mu \rho^{ \mu  - 2} \int_0^t |\psi(\tau)| \frac{ r^{\mu - 1} e^{\frac{-(r -\rho)^2}{5(t - \tau)} }}{(t - \tau)(t - \tau + \rho^2)^{\frac{\mu}{2} - 1}} \, d\tau \\
& \leq C_\mu \rho^{ \mu - 2} \sqrt{\int_0^t \psi^2(\tau) \, d\tau } \\
& \qquad \cdot \sqrt{ \int_0^t \frac{r^{2\mu-2}(r - \rho)^2 }{(t - \tau)^2(t - \tau + \rho^2)^{\mu - 2} } e^{\frac{-2(r -\rho)^2}{5(t - \tau)} } \, \frac{d\tau}{(r - \rho)^2} }.
\end{split}
\end{equation*}
The result follows by changing variables $u = \frac{\tau}{(r  - \rho)^2}.$ 
\end{proof}

\bibliographystyle{amsinitial}
\bibliography{biblio}

\end{document}